\newtheorem{theorem}{Theorem}[section]
\newtheorem{lemma}[theorem]{Lemma}
\newtheorem{proposition}[theorem]{Proposition}
\theoremstyle{definition}
\newtheorem{remark}{Remark}
\DeclareMathAlphabet{\mathbf}{OT1}{cmr}{bx}{it}
\DeclareMathAlphabet{\mathssb}{OT1}{cmss}{bx}{n}
\DeclareMathAlphabet{\mathssn}{OT1}{cmss}{m}{n}
\DeclareMathAlphabet{\mathub}{OT1}{cmr}{b}{n}
\renewcommand{\d}{\mathbf d}
\newcommand{\e}{{\mathbf e}}
\newcommand{\n}{\mathbf n}
\renewcommand{\u}{\mathbf u}
\newcommand{\A}{\mathbf A}
\newcommand{\B}{\mathbf B}
\newcommand{\C}{\mathbf C}
\newcommand{\D}{\mathbf{D}}
\renewcommand{\L}{\mathbf{L}}
\newcommand{\Ac}{\mathcal A}
\newcommand{\Cc}{\mathcal C}
\newcommand{\Hc}{\mathcal H}
\newcommand{\Ic}{\mathcal I}
\newcommand{\Uc}{\mathcal U}
\newcommand{\Vc}{\mathcal V}
\newcommand{\Sc}{\mathcal S}
\newcommand{\Tc}{\mathcal T}
\newcommand{\Wc}{\mathcal W}
\newcommand{\Xc}{\mathcal X}
\newcommand{\1}{\mathbf{1}}
\newcommand\Real{{\mathord{\rm{I \kern-.22em R}}}}
\newcommand{\beqn}{\begin{equation}}
\newcommand{\eeqn}{\end{equation}}
\newcommand{\bn}{{\bf n}}
\newcommand{\ve}{\varepsilon}
\newcommand{\bu}{{\bf u}}
\newcommand\Tstrut{\rule{0pt}{2.6ex}}         
\newcommand\Bstrut{\rule[-1.3ex]{0pt}{0pt}}   
\title[Nonlocal Curvatures of Surfaces] 
      {On the Nonlocal Curvatures of \\ Surfaces with or without Boundary}
\author[Roberto Paroni, Paolo Podio-Guidugli and Brian Seguin]{}
\subjclass{Primary: 35R11; 49Q05; Secondary: 53A05.}
 \keywords{surfaces {\color{black} with boundary}, nonlocal curvatures, $s$-area functional, $s$-perimeter functional.}
 \email{paroni@uniss.it}
 \email{ppg@uniroma2.it}
 \email{bseguin@luc.edu}
\thanks{$^*$ Corresponding author: Brian Seguin}
\begin{document}
\maketitle

\centerline{\scshape Roberto Paroni}
\medskip
{\footnotesize
 \centerline{DADU, University of Sassari}
   \centerline{Palazzo del Pou Salit, Piazza Duomo 6, 07041 Alghero (SS), Italy}
} 

\medskip

\centerline{\scshape Paolo Podio-Guidugli}
\medskip
{\footnotesize
 \centerline{Accademia Nazionale dei Lincei}
   \centerline{Palazzo Corsini, Via della Lungara 10, 00165 Roma, Italy}
\medskip

 \centerline{Department of Mathematics, Universita` di Roma TorVergata}
   \centerline{Via della Ricerca Scientifica 1, 00133 Roma, Italy}
}

\medskip

\centerline{\scshape Brian Seguin$^*$}
\medskip
{\footnotesize
 \centerline{Department of Mathematics and Statistics}
   \centerline{Loyola University Chicago, Chicago, IL 60660, USA}
}

\bigskip

 \centerline{(Communicated by the associate editor name)}

\begin{abstract}
For surfaces {\color{black} without boundary}, nonlocal notions of directional and mean curvatures have been recently given. Here, we develop {\color{black}alternative  notions, special cases of which apply to}  surfaces {\color{black} with boundary}. Our main tool is a {\color{black} new fractional or nonlocal} area functional for {\color{black}compact} surfaces. 
\end{abstract}

\section{Introduction}
{{\color{black}In the standard mathematical modelling of thin elastic structures such as plates and shells  a central role is played by the local-curvature fields over their mid surfaces. On learning about the notion of nonlocal mean curvature for surfaces without boundaries, we wondered whether consideration of nonlocal curvatures would allow for capturing certain phenomenologies that are beyond the reach of standard models, so much so when the thin structures under study have a peculiar constitution, such as, say, plates and shells made of polymeric gels. This was our original motivation to try and develop the nonlocal notions of directional and mean curvature we propose in this paper, which are different from those found in the literature and, at variance with them, apply also to surfaces {\color{black} with boundary} embedded in $\Real^n$.}} 
To set the stage, we begin by recalling some  facts. 

Within the framework of the theory of functions with bounded variation on $\Real^n$, the perimeter of a bounded set $E$ with nice boundary $\partial E$  equals the $(n-1)$-dimensional Hausdorff measure of $\partial E$:
\begin{equation}\label{perarea}
\textrm{Per}(E)=\Hc^{n-1}(\partial E)\equiv \textrm{Area}(\partial E)
\end{equation}
(cf.~equation (2.5) in \cite{Alb}). The classical notion of mean curvature is derived from the stationarity condition of the area functional, a crucial step in solving the minimal-surface problem. 
Being local in nature, this notion applies to any smooth surface {\color{black} with or without boundary}.  A relation akin to \eqref{perarea}  between  generalized perimeter and area functionals plays a central role in our paper, because it is from the stationarity of the generalized area functionals we consider that we derive the nonlocal notion of mean curvature we propose.

In recent years, Caffarelli and coworkers \cite{LC,LCMag,CRS,CS,CVb} have motivated the study of $s$-perimeter functionals ($0<s<1/2$), a family of functionals over subsets  of $\Real^n$, whose stationarity condition suggests a definition of nonlocal mean curvature for the closed surface that bounds a candidate minimizer.  The regularity  of  minimizers, called $s$-minimal surfaces, has been investigated by Valdinoci and collaborators \cite{CVa,DiVa,FiVa,SaVa}. Among other things, it is known that $s$-minimal surfaces are smooth off of a singular set of dimension at most $n-8$ for $s$ sufficiently close to $1/2$.  While this is in agreement with a well-known result for classical minimal surfaces,
$s$-minimal surfaces may have  features different from their classical counterparts, in that they may stick to the boundary instead of being transversal to it \cite{DiVa, DiSaVa}.
The motion of surfaces by nonlocal mean curvature has  been investigated using level set methods \cite{CMP1,CMP2,CMP3,Im}.  
For an interesting application, the nonlocal notion of perimeter  has been used to modify the  Gauss free-energy functional used in capillarity theory \cite{MaVa}{\color{black}. A physical motivation for studying this topic is provided by the fact that surfaces with vanishing nonlocal mean curvature arise as limit interfaces of phase-coexistence models with long-tail interactions \cite{SV}}.

The functional delivering the $s$-perimeter of a measurable set $E$ admits the following alternative representations: for $0<s<1/2$, for $\alpha_{n-1}$ the volume of the unit ball in $\Real^{n-1}$, and for $\Cc E$ the complement of $E$ in $\Real^n$,
\beqn\label{sPer}
\begin{aligned}
s\text{-Per}(E)&=\frac{1}{{\alpha_{n-1}}}\int_{\Real^n}\int_{\Real^n}\frac{\chi_{\Cc E}(x)\chi_{E}(y)}{|x-y|^{n+2s}}\, dx dy,
\\&=\int_E\int_{\Cc E}\kappa(x,y) dxdy,\quad \kappa(x,y)=\frac{1}{{\alpha_{n-1}}}\frac{1}{|x-y|^{n+2s}}.
\end{aligned}
\eeqn
As $s\rightarrow 1/2^-$, $s\text{-Per}$ tends, in a sense to be specified later, to the classical perimeter functional studied in \cite{Giusti}.
The first representation in \eqref{sPer} makes it explicit how $s\text{-Per}(E)$ is related to the $H^{s}(\Real^n)$-norm of the characteristic function $\chi_E$ of $E$. The second allows  $s\text{-Per}(E)$ to be interpreted as the evaluation of a distance interaction  between a bounded set $E$ and its complement $\Cc E$ in $\Real^n$, in terms of an integral norm that assigns maximum weight to short-distance pairs $(x,y)$  while keeping track of long two-point  correlations. 

The value at $E$ of  $s\text{-Per}$ is not finite if $E$ is unbounded. In that case, a bounded set $\Omega$ is fixed and the $s$-perimeter of $E$ relative to $\Omega$ is defined in terms of the interaction functional
\begin{equation}\label{inter}
\Ic(A,B):=\int_A\int_{B}\kappa(x,y)\, dx dy,\quad A\cap B =\emptyset,
\end{equation}
in the following way:
\beqn\label{spero}
s\text{-Per}(E,\Omega):=\Ic(E\cap\Omega,\Cc E\cap\Omega)+\Ic(E\cap\Omega,\Cc E\cap\Cc \Omega)+\Ic(E\cap\Cc \Omega,\Cc E\cap\Omega);
\eeqn
 this definition coincides, up to the multiplicative constant $(\alpha_{n-1})^{-1}$, 
with that given in \cite{CRS}.
We recap those properties of $s\text{-Per}$ functionals that are relevant to our present developments in Section 2. 

{\color{black}In this paper,  $\Sc$ denotes  a $(n-1)$-dimensional surface embedded in $\Real^n$, with or without boundary; in the latter instance, we regard $\Sc$ as the complete boundary $\partial E$ of a bounded open set $E$ in $\Real^n$. Our first goal is to develop a notion of $s$-area for whatever $\Sc$. Clearly,  when $\Sc\equiv\partial E$, it would be expedient to have a representation of $s\text{-Per}$ alternative to \eqref{sPer}, according to which the evaluation of the s-perimeter of $E$ depended only on $\partial E$.}
In Section 3 we motivate and discuss the following choice: 
\begin{equation}
 s\text{-Per}(E)=\frac{1}{2}\int_{\Xc(\partial E)}\kappa(x,y)\, dxdy,\nonumber
\end{equation}
where  $\Xc(\partial E)$ consist of all pairs $(x,y)$ such that the straight-line segment $[x,y]$ both has an odd number of cross intersections with $\partial E$ and is not tangent to $\partial E$.
However, {\color{black}to define} the $s$-area of a compact smooth surface $\Sc$ using this formula  with $\partial E$ replaced by $\Sc$ is not viable because the integral on the right in general diverges when $\Sc$ is not the boundary of a set.  Thus, similar to what is done for the $s$-perimeter of an unbounded set, we define the $s$-area of $\Sc$ relative to a chosen bounded set $\Omega$. 
Once we have a definition for $s\text{-area}$ functionals over compact surfaces, we are able to close Section 3 by showing that the $s\text{-area}$ converges, in an appropriate sense, to the classical notion of area, as $s$ approaches $1/2$ {\color{black} from below}. Next, in Section 4, we calculate the first variation of the $s\text{-area}$ functional. The emerging definition of  nonlocal mean curvature, {\color{black}which is meaningful for any surface, compact or otherwise}, is laid down and discussed in Section 5{{\color{black}, where we also adapt to our context the notion of nonlocal directional curvature \cite{AV,PPGL}}. 

{{\color{black} Here is a quick introduction to the new notion of nonlocal mean curvature we propose.
Let  $\Sc$ be an oriented smooth surface.} The 
{\color{black} nonlocal mean curvature at $z\in\Sc$ is 
$$
H_s(z):=\frac{1}{\omega_{n-2}}{PV\!\!\!}\int_{\Real^n}  \widehat\chi_{\Sc}(z,y)|z-y|^{-n-2s}\, dy,
$$
where, for $x\in\Real^n$,
$$
\widehat\chi_\Sc(z,x):=\left\{
\begin{array}{cc}
+1 & \textrm{if}\; x\in \Ac_i(z,1), \\[6pt]
-1 & \textrm{if}\; x\in \Ac_e(z,1).
\end{array}\right.
$$
The sets $\Ac_i(z,1)$ and $\Ac_e(z,1)$ are defined by means of the set $\Xc(\Sc)$, and can be respectively interpreted
as the `interior' and the `exterior' of the surface $\Sc$ relative to the point $z$; while a precise definition is to be found in Section 4, 
Fig.~\ref{AIAEcolor} offers a representation of these sets in a particular case.
\begin{figure}[h]
\centering
\includegraphics[width=4in]{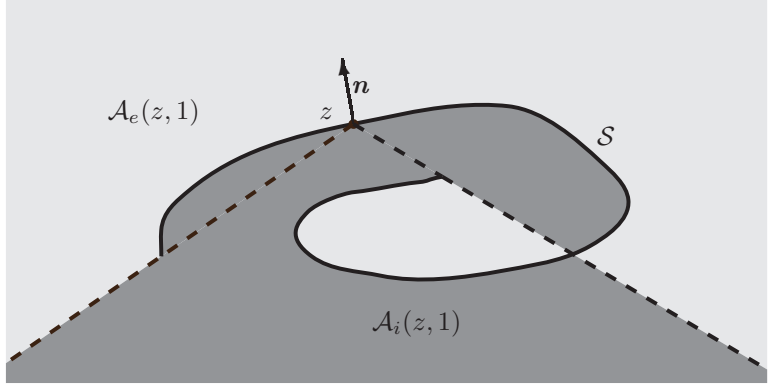}
\thicklines
\put(-170,100){$z$}
\put(-250,100){$\Ac_e(z,1)$}
\put(-150,20){$\Ac_i(z,1)$}
\put(-158,110){$\n$}
\put(-65,90){$\Sc$}
\put(-171.5,105){\rotatebox[origin=c]{100}{$\vector(1,0){25}$}}
\caption{The solid line depicts $\Sc$. The set $\Ac_i(z,1)$ is shown in dark grey,  the set $\Ac_e(z,1)$ in light grey.  
}
\label{AIAEcolor}
\end{figure}
{\color{black}By the use of} a formula of Cabr\'e et al.~\cite{Cetal}, the mean curvature of $\Sc$ can be given the following alternative expression: 
$$
H_s(z)=\frac{1}{s\,\omega_{n-2}} PV\!\!\!\int_{\Sc} |z-y|^{-(n+2s)}(z-y)\cdot \bn_{\Ac_i}(y)\,dy,
$$
where $\bn_{\Ac_i}$ is the outward unit normal to $\Ac_i(z,1)$.
}}
\section{$\boldsymbol s$-perimeter and nonlocal curvatures  of surfaces {\color{black} without boundary}}\label{2}

Let $B_R$ denote the ball of radius $R$ centered at the origin of $\Real^{n}$; throughout the paper we  take {\color{black}$n\ge 2$}. Caffarelli and Valdinoci \cite{CVb} proved that, if for some $R>0$ the set $\partial E\cap B_R$ is $C^{1,\beta}$ for some $\beta\in (0,1)$, then
\beqn\label{perlimit}
\lim_{s\to {1/2}^-} (1-2s)\, s\text{-Per}(E,B_r)=\text{Per}(E,B_r)
\eeqn
for almost every $r\in (0,R)$
(for another result along this line, see  \cite{AdPM}).
The regularity assumption on the boundary of $E$ made in this statement is natural 
for minimizers of the $s$-perimeter functional.  
A set $E\subset \Real^n$ that minimizes $s\text{-Per}(\bar E,\Omega)$ among all the measurable  sets
$\bar E\subset \Real^n$ such that $E\setminus \Omega=\bar E\setminus \Omega$ is called $s$-minimal.
It is proved in \cite{CRS} that, if $E$ is  $s$-minimal,
then $\partial E\cap \Omega$ is of class $C^{1,\beta}$ for some $\beta\in (0,1)$, up to a set of Hausdorff codimension in $\Real^n$ at least equal to $2$.  It is also proved in \cite{CRS} that, if $E$ is an $s$-minimal set in $\Omega$ and $\partial E$ is smooth enough, then $E$ satisfies the Euler-Lagrange equation
of the $s$-perimeter functional: 
$$
H_s=0\quad \textrm{on}\;\partial E.
$$
Here, the nonlocal mean curvature of $E$ at $z\in \partial E$ is defined to be
\begin{equation}\label{defAV}
H_s(z):=\frac{1}{\omega_{n-2}}{PV\!\!\!}\int_{\Real^n}\widetilde\chi_E(y) |z-y|^{-(n+2s)}dy,
\end{equation}
where $PV$ stands for the principal value of the integral,\footnote{In the present instance,
$$
PV\!\!\!\int_{\Real^n}\widetilde\chi_E(y) |z-y|^{-(n+2s)}dy
=\lim_{\varepsilon\rightarrow 0}\int_{\Real^n\setminus B_\varepsilon(z)}\widetilde\chi_E(y) |z-y|^{-(n+2s)}dy.
$$
} $\omega_{n-2}$ is the Hausdorff measure of the $(n-2)$-dimensional unit sphere, and
\begin{equation}\label{tildecar}
\widetilde\chi_E(y):=\left\{\begin{array}{c}\!\!\!\!+1\quad \textrm{if}\; y\in E, \\-1\quad \textrm{if}\; y\in {\mathcal C}E.\end{array}\right.
\end{equation}
This definition of nonlocal mean curvature  coincides with that given in \cite{AV}; the definition given in \cite{Cetal} is the same, to within a multiplicative constant.

Following  \cite{AV}, we now define the nonlocal directional curvature.
Let $y\in\partial E$, let $\e$ be a unit vector tangent to $\partial E$ at $z$, and let
\[\label{pixe}
\pi(z,\e):=\{y\in\Real^n\,|\;y=z+\rho\e+h\n(z),\;\,\rho>0,\;\,h\in\Real\}
\]
be the half-plane through $z$ defined by the unit vector $\e$ and the normal $\n(z)$ (Figure \ref{fig2}); \begin{figure}[h]
\centering%
\includegraphics[height=6cm]{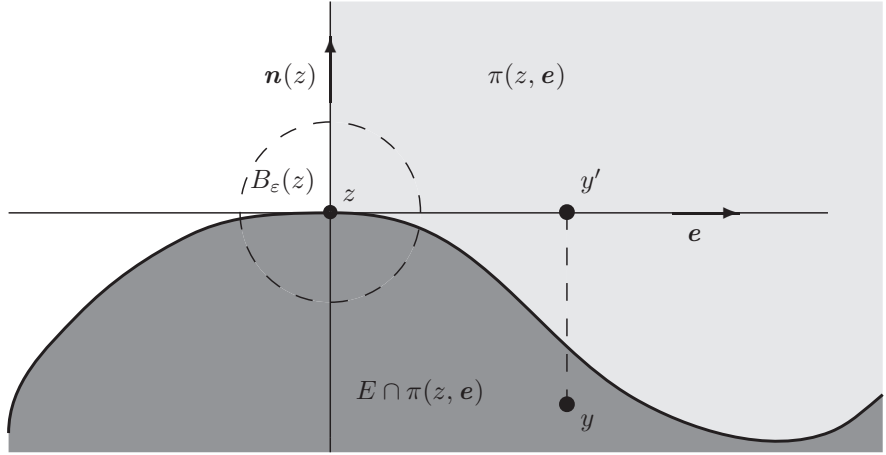}
\thicklines
\put(-200,20){$E\cap \pi(z,\e)$}
\put(-115,10){$y$}
\put(-115,100){$y^\prime$}
\put(-205,95){$z$}
\put(-150,140){$\pi(z,\e)$}
\put(-240,100){$B_\ve(z)$}
\put(-219.5,140){\rotatebox[origin=c]{90}{$\vector(1,0){25}$}}
\put(-235,140){$\n(z)$}
\put(-80,90.5){\rotatebox[origin=c]{0}{$\vector(1,0){25}$}}
\put(-75,80){$\e$}
\caption{{\color{black}The intersection of} an open set $E$ {\color{black}and} the half plane $\pi(z,\e)$.}
\label{fig2}
\end{figure}
moreover, let a superscript prime denote the points of the straight line through $z$ in the direction of $\e$ when they are obtained by projection  in the direction of $\n(z)$ of points of $E\cap \pi(z,\e)$, so that
$ y^\prime=z+\rho \e$.
The nonlocal directional curvature of $E$ at $z$ in the direction $\e$ is
\begin{equation}\label{dirNMC}
K_{s,\e}(z):={PV\!\!\!}\int_{\pi(z,\e)}|y^\prime-z|^{n-2}\,\widetilde\chi_E(y)\,|z-y|^{-(n+2s)} dy,\quad s\in(0,1/2).
\end{equation}

It is proved in \cite{AV} that the nonlocal directional and mean curvatures tend to their local counterparts pointwise in the limit when $s\,\rightarrow\,1/2^-$; precisely, 
\begin{equation}\label{curvlim}
\lim_{s\rightarrow 1/2^-} (1-2s) K_{s,\e}=K_\e,\quad \lim_{s\rightarrow 1/2^-} (1-2s) H_{s}=H.
\end{equation}

\section{The nonlocal area functional}
To motivate the definition of an $s$-area functional related to the $s$-perimeter functional, recall the definition of the $s$-perimeter for a bounded set $E$.  To evaluate the integrals in \eqref{sPer}$_2$, one has to identify pairs of points $x,y\in\Real^n$ such that one point is in $E$ and the other in $\Cc E$. 
Now, we would like to write the $s$-perimeter functional as an integral over a region  depending only on $\partial E$:
\begin{equation}\label{perarearel}
 s\text{-Per}(E)=\frac{1}{2}\int_{\Xc(\partial E)}\kappa(x,y)\, dxdy, \quad \Xc(\partial E)\subset\Real^n\times\Real^n.
\end{equation}
In preparation for choosing such a region, let us consider Figure \ref{figcrossings}.
\begin{figure}[h]
\centering
\includegraphics[width=4in]{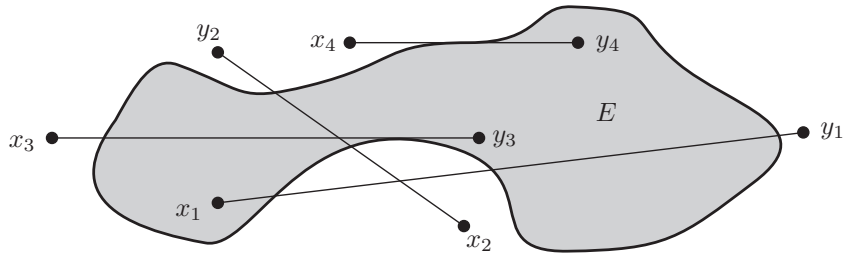}
\thicklines
\put(-240,14){$x_1$}
\put(4,45){$y_1$}
\put(-130,2){$x_2$}
\put(-232,82){$y_2$}
\put(-303,40){$x_3$}
\put(-120,42){$y_3$}
\put(-189,78){$x_4$}
\put(-81,78){$y_4$}
\put(-81,50){$E$}
\caption{Several different ways a straight-line segment can intersect $\partial E$.}
\label{figcrossings}
\end{figure}
We see that   point $x_1$ is internal to $E$,  point $y_1$ external, and the straight-line segment $[x_1,y_1]$, {{\color{black} defined by
$$[x_1,y_1]:=\{ (1-\lambda) x_1+\lambda y_1\ |\ \lambda\in[0,1]\},$$}}  has an odd number of points in common with $\partial E$. We also see that segment $[x_2, y_2]$, which connects two points external to $E$,  has an even number of points in common with $\partial E$. However, the set of pairs with one point in $E$ and the other in $\Cc E$ cannot be characterized by only looking  at the parity of the number of common points the connecting straight-line segment has with $\partial E$. In fact, not all  segments  with  one end in  and the other end out of $E$ have an odd number of points in common with $\partial E$: see e.g.~the  segments $[x_3, y_3]$ and $[x_4,y_4]$ having, respectively, two and infinitely many common points with $\partial E$.

We let  
 $\Xc(\partial E)$ consist of all pairs $(x,y)$ such that the straight-line segment $[x,y]$ both has an odd number of cross intersections with $\partial E$ and is not tangent to $\partial E$.  While it is true that $\Xc(\partial E)\subset  (E\times \Cc E)\cup (\Cc E\times E)$, the previous discussion  shows that the reverse inclusion does not hold. However, as stated in Proposition \ref{intmz} below, the set $\Xc(\partial E)$ differs from $(E\times \Cc E)\cup (\Cc E\times E)$ by a set of $\Hc^{2n}$-measure zero.  This result validates formula \eqref{perarearel}, the main tool to  put together our definition of a nonlocal area functional for a compact surface, {\color{black} with or without boundary}.  To establish Proposition \ref{intmz}, the following change-of-variables formula is needed.

\begin{lemma}\label{lemCOV}
Let $\Sc$ be a compact $(n-1)$-dimensional $C^1$ manifold in $\Real^n$ and let $\Uc_n$ denote the unit sphere in $\Real^n$.  Consider the function 
$$
\Phi:\Sc\times\Uc_n\times\Real^+\times\Real^-\rightarrow \Real^n\times\Real^n
$$ 
defined by
\beqn\label{PhiCOV}
\Phi(z,\u,\xi,\eta):=(z+\xi \u,z+\eta \u)\quad \text{for all}\ (z,\u,\xi,\eta)\in \Sc\times\Uc_n\times\Real^+\times\Real^-,
\eeqn
where $\Real^+$ and $\Real^-$  denote the sets $(0,+\infty)$ and $(-\infty,0)$.
If $\Ac$ is a subset of $\Sc\times\Uc_n\times\Real^+\times\Real^-$ and $f:\Phi(\Ac)\rightarrow\Real$ is any positive integrable function, then
\beqn\label{COV}
\iint_{\Phi(\Ac)}f(x,y)dxdy \le\iiiint_\Ac f(z+\xi\u,z+\eta\u)|\xi-\eta|^{n-1}|\u\cdot\n(z)|dzd\u d\xi d\eta,
\eeqn
where $\n(z)$ is a normal to the surface $\Sc$ at the point $z$. Moreover, if  the restriction of the function $\Phi$ to $\Ac$ is injective, then \eqref{COV} holds with an equality sign.
\end{lemma}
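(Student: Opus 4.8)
The plan is to view the map $\Phi$ as a parametrization of $\Real^n\times\Real^n$ by polar-type coordinates centered on the surface $\Sc$, and to apply the area formula from geometric measure theory. First I would set up the manifold $\Sc\times\Uc_n\times\Real^+\times\Real^-$, noting that it is a $2n$-dimensional $C^1$ manifold (of dimension $(n-1)+(n-1)+1+1=2n$), matching the dimension of the target $\Real^n\times\Real^n$. For a $C^1$ map between equidimensional manifolds, the area (change-of-variables) formula states that for any nonnegative measurable $f$,
\begin{equation}\nonumber
\iint_{\Phi(\Ac)} f(x,y)\, \#\{\Phi^{-1}(x,y)\cap\Ac\}\, dx\, dy = \iiiint_\Ac f(\Phi(z,\u,\xi,\eta))\, J\Phi\, dz\, d\u\, d\xi\, d\eta,
\end{equation}
where $J\Phi$ is the Jacobian (the square root of the Gram determinant of the differential of $\Phi$, or the absolute value of the determinant when using compatible orthonormal frames). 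Since the multiplicity $\#\{\Phi^{-1}(x,y)\cap\Ac\}\ge 1$ for $(x,y)\in\Phi(\Ac)$, dropping it yields the inequality \eqref{COV}, and when $\Phi|_\Ac$ is injective the multiplicity equals $1$ and equality holds. So the whole statement reduces to computing $J\Phi$ and checking it equals $|\xi-\eta|^{n-1}|\u\cdot\n(z)|$.

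The core computation is the Jacobian. I would fix a point $(z,\u,\xi,\eta)$, choose an orthonormal basis $\{\boldsymbol\tau_1,\dots,\boldsymbol\tau_{n-1}\}$ of the tangent space $T_z\Sc$ and an orthonormal basis $\{\boldsymbol\sigma_1,\dots,\boldsymbol\sigma_{n-1}\}$ of $T_\u\Uc_n$ (i.e.\ of $\u^\perp$), and write out the differential $d\Phi$ acting on each basis vector of the domain. One gets: $d\Phi(\boldsymbol\tau_i)=(\boldsymbol\tau_i,\boldsymbol\tau_i)$; $d\Phi(\boldsymbol\sigma_j)=(\xi\boldsymbol\sigma_j,\eta\boldsymbol\sigma_j)$; $d\Phi(\partial_\xi)=(\u,\0)$; $d\Phi(\partial_\eta)=(\0,\u)$. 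Assembling these $2n$ vectors in $\Real^n\times\Real^n$ as columns of a $2n\times 2n$ matrix (with respect to an orthonormal basis of $\Real^n\times\Real^n$ built from $\{\u,\boldsymbol\sigma_1,\dots,\boldsymbol\sigma_{n-1}\}$ for $\Real^n$), the matrix becomes block-structured. After rearranging rows and columns, the relevant determinant factors: the $\boldsymbol\sigma$-block contributes a $2(n-1)\times 2(n-1)$ determinant that reduces, after row operations, to $|\xi-\eta|^{n-1}$, while the block involving the $\u$-directions and $\partial_\xi,\partial_\eta$ together with the $\boldsymbol\tau_i$ contributions gives a factor $|\u\cdot\n(z)|$ (this is where the normal component appears, since $\boldsymbol\tau_i$ has no $\u$-component precisely by orthogonality, and the projection of the tangent frame onto $\u^\perp$ has determinant measuring the angle between $T_z\Sc$ and $\u^\perp$, i.e.\ $|\u\cdot\n(z)|$). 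I would carry this out carefully, perhaps by first computing the Gram matrix $d\Phi^\top d\Phi$ and showing its determinant is $(\xi-\eta)^{2(n-1)}(\u\cdot\n(z))^2$.

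The main obstacle I expect is the bookkeeping in the Jacobian computation: correctly identifying which $2n\times 2n$ determinant arises, handling the fact that $\{\boldsymbol\tau_i\}$ need not be orthogonal to $\u$ (so the columns $(\boldsymbol\tau_i,\boldsymbol\tau_i)$ have components along $\u$ in both factors), and cleanly extracting the two factors. A convenient way around this is to decompose each $\boldsymbol\tau_i=(\boldsymbol\tau_i\cdot\u)\u+\boldsymbol\tau_i^\perp$ with $\boldsymbol\tau_i^\perp\in\u^\perp$, use column operations to subtract multiples of $d\Phi(\partial_\xi)$ and $d\Phi(\partial_\eta)$ from the $\boldsymbol\tau_i$ columns (which does not change the determinant), reducing to the case where the tangent directions map into $\u^\perp\times\u^\perp$; then the determinant splits as (determinant of the $\u$-part, a $2\times 2$ block from $\partial_\xi,\partial_\eta$, equal to $\pm 1$) times (a $2(n-1)\times 2(n-1)$ determinant in $\u^\perp\times\u^\perp$). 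The latter has the block form $\begin{bmatrix} T & \xi I \\ T & \eta I\end{bmatrix}$ where $T$ is the matrix whose columns are $\boldsymbol\tau_i^\perp$ expressed in the $\{\boldsymbol\sigma_j\}$ basis; row-reducing gives $\det = \det(T)\cdot(\eta-\xi)^{n-1}$ up to sign, and $|\det(T)|=|\u\cdot\n(z)|$ because $T$ is the matrix of the orthogonal projection $T_z\Sc\to\u^\perp$ in orthonormal bases, whose Jacobian is the cosine of the angle between the hyperplanes, namely $|\u\cdot\n(z)|$. Finally I would remark that on the (relatively closed) set where $\u\cdot\n(z)=0$ or $\xi=\eta$ the map $\Phi$ is degenerate and contributes nothing, and that measurability of $\Phi(\Ac)$ and applicability of the area formula for $C^1$ maps (Lipschitz on compact pieces, since $\Sc$ is compact) pose no difficulty, which completes the argument.
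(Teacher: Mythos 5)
Your proposal is correct and shares the paper's overall skeleton: invoke the area formula with multiplicity, drop the multiplicity (which is at least $1$ on $\Phi(\Ac)$) to obtain the inequality and note that it equals $1$ in the injective case, thereby reducing the lemma to showing that the Jacobian of $\Phi$ equals $|\xi-\eta|^{n-1}\,|\u\cdot\n(z)|$. Where you genuinely diverge is in how that Jacobian is computed. The paper pulls everything back through explicit charts $\varphi$ for $\Sc$ and $\boldsymbol\psi$ for $\Uc_n$, applies the Euclidean (Evans--Gariepy) area formula to the composite map $\tilde\Phi$, block-triangularizes $\nabla\tilde\Phi$ by row operations, and then identifies the factor $|\u\cdot\n|$ algebraically through the cofactor vector of $\nabla\varphi$ and the Cauchy--Binet theorem, with the chart Jacobians $J_\varphi$ and $J_{\boldsymbol\psi}$ reabsorbed into the definition of the surface integrals. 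You instead work intrinsically with orthonormal frames of $T_z\Sc$ and $\u^\perp$, so no chart Jacobians ever appear, and you extract $|\u\cdot\n(z)|$ geometrically as the determinant of the orthogonal projection $T_z\Sc\to\u^\perp$ (the cosine of the angle between the two hyperplanes, i.e.\ between their normals $\n(z)$ and $\u$). Your column-operation trick --- subtracting multiples of the $\partial_\xi$ and $\partial_\eta$ columns to annihilate the $\u$-components of the $(\boldsymbol\tau_i,\boldsymbol\tau_i)$ columns --- is exactly the right move and plays the role of the paper's block manipulations; both arguments then reduce to the same $2(n-1)\times 2(n-1)$ block determinant yielding $(\eta-\xi)^{n-1}\det T$. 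The intrinsic route is arguably cleaner (it also makes transparent the paper's later remark that the lemma survives for non-orientable $\Sc$, since only $|\u\cdot\n|$ appears), at the modest cost of justifying the area formula directly for $C^1$ maps on a product of manifolds rather than citing the Euclidean Lipschitz version; as you note, localizing to compact chart pieces disposes of that point.
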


\begin{proof}
See Figure~\ref{figCOV} for a depiction of how $\Phi$ associates $(z,\u,\xi,\eta)$ with points $x$ and $y$ in $\Real^n$.  %
\begin{figure}[h]
\centering
\thicklines
\includegraphics[height=3.5cm]{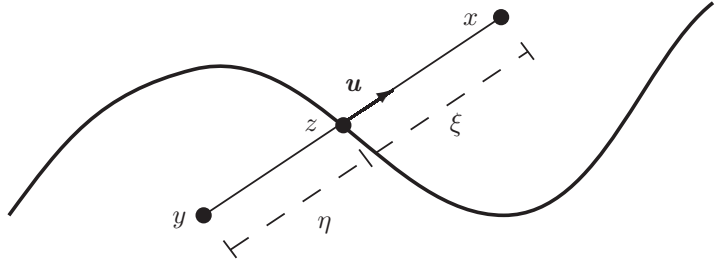}
\put(-155,50){$z$}
\put(-205,15){$y$}
\put(-95,90){$x$}
\put(-150,13){$\eta$}
\put(-100,50){$\xi$}
\put(-148,58){\rotatebox[origin=c]{33.5}{$\vector(1,0){25}$}}
\put(-140,65){$\bu$}
\caption{How the mapping $\Phi$ in \eqref{PhiCOV} associates $(z,\u,\xi,\eta)$ with the pair of points $x$ and $y$ in $\Real^n$.}
\label{figCOV}
\end{figure}

It suffices to prove the Lemma for a set $\Ac=\Sc_\Ac\times \Uc_\Ac\times A_\xi\times A_\eta$
where $\Sc_\Ac\subset \Sc$, $\Uc_\Ac\subset \Uc_n$, $A_\xi\subset\Real^+$, and $A_\eta\subset\Real^-$.
We may further suppose that the sets $\Sc_\Ac$ and $\Uc_\Ac$ can be covered by just one chart (the general case may be reduced to this by means of a partition of unity). This is tantamount to asserting that there are two sets
$P_\Ac, U_\Ac\subset \Real^{n-1}$ and two $C^1$ bijective mappings
$$
P_\Ac\ni p \mapsto \varphi(p)=z\in \Sc_\A
$$
and 
$$
U_\Ac\ni u \mapsto {\boldsymbol\psi}(u)=\u\in \Uc_\A .
$$
For later use, we recall that the integral of a function $g$ over $\Sc_\Ac$ is defined by
\begin{equation}\label{intsurf}
\int_{\Sc_\Ac} g(z)\,dz=\int_{P_\Ac} g(\varphi(p)) J_\varphi\,dp,
\end{equation}
where $J_\varphi=\sqrt{\det \nabla \varphi^T\nabla \varphi}$ is the Jacobian of $\varphi$; the integral over $\Uc_\Ac$ is defined similarly.

Set $A=P_\Ac\times U_\Ac\times A_\xi\times A_\eta$ and define $\tilde \Phi:A\to  \Phi(\Ac)$ by
$$
\tilde \Phi(p,u,\xi,\eta)=\Phi(\varphi(s),{\boldsymbol\psi}(u),\xi,\eta)=(\varphi(s)+\xi {\boldsymbol\psi}(u),\varphi(p)+\eta {\boldsymbol\psi}(u)),
$$
and $\tilde f:A \to \Real$ by 
$$
\tilde f(p,u,\xi,\eta):=f(\varphi(p)+\xi{\boldsymbol\psi}(u),\varphi(p)+\eta{\boldsymbol\psi}(u)).
$$
By the Area Formula (see Theorem 3.9 of Evans and Gariepy \cite{EvGa}),
it follows that 
$$
\iint_{\Phi(\Ac)}\Big[ \sum_{(p,u,\xi,\eta)\in \tilde \Phi^{-1}(x,y)}\tilde f(p,u,\xi,\eta)\Big]dxdy=\iiiint_A\tilde f(p,u,\xi,\eta)|\det \nabla \tilde \Phi|dpdu d\xi d\eta.
$$
{\color{black}Let $\tilde  \Phi^{-1}(x,y)$ be the pre-image through  $\tilde \Phi^{-1}$ of the point $(x,y)$;  s}ince by definition $\tilde f=f \circ \tilde \Phi$,  for any  $(p,u,\xi,\eta)\in \tilde \Phi^{-1}(x,y)$
we have that $\tilde f(p,u,\xi,\eta)=f(x,y)$ and hence
$$
\iint_{\Phi(\Ac)}f(x,y)dxdy \le \iint_{\Phi(\Ac)}\Big[ \sum_{(p,u,\xi,\eta)\in \tilde \Phi^{-1}(x,y)}\tilde f(p,u,\xi,\eta)\Big]dxdy.
$$
Notice that, if the function $\Phi$ restricted to $\Ac$ is injective, then the above equation holds with an equality sign.
Thus
$$
\iint_{\Phi(\Ac)}f(x,y)dxdy \le\iiiint_A f(\varphi(p)+\xi{\boldsymbol\psi}(u),\varphi(s)+\eta{\boldsymbol\psi}(u))|\det \nabla \tilde \Phi|dpdu d\xi d\eta,
$$
from which, taking into account \eqref{intsurf}, the Lemma follows provided that
\begin{equation}\label{Jphi}
|\det \nabla \tilde \Phi|=|\xi-\eta|^{n-1} |\u\cdot \n| J_\varphi J_{\boldsymbol\psi}.
\end{equation}
To prove this identity, we note that the gradient of $\tilde \Phi$ is:
$$
\nabla\tilde\Phi=\begin{blockarray}{ccccc}
n-1 & n-1 & 1 & 1 & \\
    \begin{block}{(c|c|c|c)c}
      \nabla \varphi & \xi \nabla {\boldsymbol\psi} & {\boldsymbol\psi} & \textbf{0}  & n \Bstrut \\ 
      \cline{1-4}
       \nabla \varphi  & \eta \nabla {\boldsymbol\psi} & \textbf{0} & {\boldsymbol\psi} & n \Tstrut \\ 
    \end{block}
\end{blockarray}.
$$
Thus,
\begin{align}
|\det \nabla\tilde \Phi|
& =\Big|\det\begin{blockarray}{cccc}
\phantom{n-1} & \phantom{n-1} & \phantom{1} & \phantom{1}  \\
    \begin{block}{(c|c|c|c)}
      \nabla \varphi & \xi \nabla {\boldsymbol\psi} & {\boldsymbol\psi} &\textbf{0}   \Bstrut \\ 
      \cline{1-4}
       \nabla \varphi & \eta \nabla {\boldsymbol\psi} &\textbf{0} & {\boldsymbol\psi} \Tstrut \\ 
    \end{block}
\end{blockarray}
 \Big|\nonumber\\
 & =\Big|\det\begin{blockarray}{cccc}
\phantom{n-1} & \phantom{n-1} & \phantom{1} & \phantom{1}  \\
    \begin{block}{(c|c|c|c)}
      \nabla \varphi & \xi \nabla {\boldsymbol\psi} & {\boldsymbol\psi} &\textbf{0}   \Bstrut \\ 
      \cline{1-4}
      \textbf{0}& (\eta-\xi) \nabla {\boldsymbol\psi} &-{\boldsymbol\psi} & {\boldsymbol\psi} \Tstrut \\ 
    \end{block}
\end{blockarray}
 \Big|\nonumber\\
& =
 \Big|\det\begin{blockarray}{cccc}
\phantom{n-1} & \phantom{n-1} & \phantom{1} & \phantom{1}  \\
    \begin{block}{(c|c|c|c)}
      \nabla \varphi & \xi \nabla {\boldsymbol\psi} & {\boldsymbol\psi} & \textbf{0}   \Bstrut \\ 
      \cline{1-4}
      \textbf{0} & (\eta-\xi) \nabla {\boldsymbol\psi} &\textbf{0}  & {\boldsymbol\psi} \Tstrut \\ 
    \end{block}
\end{blockarray}
 \Big|\nonumber\\
 & =
 \Big|\det\begin{blockarray}{cccc}
\phantom{n-1} & \phantom{n-1} & \phantom{1} & \phantom{1}  \\
    \begin{block}{(c|c|c|c)}
      \nabla \varphi & {\boldsymbol\psi} &  \xi \nabla {\boldsymbol\psi} & \textbf{0}   \Bstrut \\ 
      \cline{1-4}
      \textbf{0} & \textbf{0} &(\eta-\xi) \nabla {\boldsymbol\psi}  & {\boldsymbol\psi} \Tstrut \\ 
    \end{block}
\end{blockarray}
 \Big|.\nonumber
\end{align}
For upper-block triangular matrices, the following identity holds:
\begin{align*}
\det\begin{blockarray}{cc}
  &  \\
    \begin{block}{(c|c)}
      \A & \B   \\ 
      \cline{1-2}
      {\textbf 0} & \C \Tstrut \\ 
       \end{block}
\end{blockarray}&=\det(\A)\det(\C).
\end{align*}
Consequently,
\begin{align}
\nonumber |\det \nabla\tilde \Phi| &=   \big|\det (\nabla \varphi |{\boldsymbol\psi}) \det ((\eta-\xi) \nabla {\boldsymbol\psi}  | {\boldsymbol\psi})\big| \\
 &= |\eta-\xi|^{n-1}  \big|\det (\nabla \varphi |{\boldsymbol\psi}) \det (\nabla {\boldsymbol\psi}  | {\boldsymbol\psi})\big|;  \label{detdet}
\end{align}
here $(\D|\d)$ denotes the $n\times n$ matrix whose last column is $\d\in \Real^{n}$. 
Let ${\rm cof }\nabla \varphi$ denote the vector whose $i$-th component is equal to $(-1)^{n+i}$ times the 
determinant of the $(n-1)\times(n-1)$ matrix obtained by deleting the $i$-th row from $\nabla \varphi$.
We observe that, for every $i=1,\ldots, n-1$, we have that
$$
0=\det (\nabla \varphi |\frac{\partial \varphi}{\partial p_i})={\rm cof }\nabla \varphi\cdot \frac{\partial \varphi}{\partial p_i},
$$
which implies that ${\rm cof }\nabla \varphi$ is orthogonal to the surface $\Sc$.
By the Cauchy--Binet theorem, it follows that $|{\rm cof }\nabla \varphi|=J_\varphi$; hence,
$$
\n=\frac 1{J_\varphi}{\rm cof }\nabla \varphi
$$
is a unit vector orthogonal to the surface $\Sc$. Similarly we can show that the unit normal ${\boldsymbol \psi}$ to the surface $\Uc_n$ is given by
$$
{\boldsymbol \psi}=\frac 1{J_{\boldsymbol \psi}}{\rm cof }\nabla {\boldsymbol \psi}.
$$
Then
\begin{align*}
\det (\nabla \varphi (p) |{\boldsymbol\psi }(u))&={\rm cof }\nabla \varphi\cdot {\boldsymbol\psi }(u)=J_\varphi \n \cdot \u,\\
\det (\nabla {\boldsymbol\psi} (u)| {\boldsymbol\psi}(u))&=J_{\boldsymbol\psi} {\boldsymbol\psi}(u)\cdot {\boldsymbol\psi}(u)=J_{\boldsymbol\psi},
\end{align*}
and hence from \eqref{detdet} we deduce \eqref{Jphi}.
\end{proof}

Notice that Lemma \ref{lemCOV} holds also for non-orientable surfaces, in which case a  discontinuous normal field may have to be used.
The change-of-variables formula \eqref{COV} is used to prove the next proposition.

\begin{proposition}\label{intmz}
Let $\Sc$ be a compact $(n-1)$-dimensional $C^1$ manifold in $\Real^n$.  The set consisting of all pairs of points $(x,y)\in \Real^n\times\Real^n$ that satisfy at least one of the following conditions:
\begin{enumerate}
\item {\color{black} either} $x\in \Sc$ or $y\in \Sc$;
\item the straight-line segment $[x,y]$ has an infinite number of common points with $\Sc$;
\item the straight-line segment $[x,y]$ is tangent to $\Sc$;
\end{enumerate}
has $\Hc^{2n}$-measure zero.
\end{proposition}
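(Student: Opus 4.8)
I would write $\Bc_1,\Bc_2,\Bc_3\subseteq\Real^n\times\Real^n$ for the sets of pairs $(x,y)$ fulfilling conditions (1), (2), (3) of the statement, respectively; the plan is to establish
\[
\Hc^{2n}(\Bc_1)=0,\qquad \Bc_2\subseteq\Bc_1\cup\Bc_3,\qquad \Hc^{2n}(\Bc_3)=0,
\]
which together yield $\Hc^{2n}(\Bc_1\cup\Bc_2\cup\Bc_3)=0$. The first relation is elementary: $\Bc_1=(\Sc\times\Real^n)\cup(\Real^n\times\Sc)$, and since $\Sc$ is a compact $(n-1)$-dimensional $C^1$ manifold it is covered by finitely many charts in which it is the graph of a $C^1$ function over an open subset of a coordinate hyperplane; each such graph is Lebesgue-null in $\Real^n$, so $\Hc^n(\Sc)=0$, and Fubini gives $\Hc^{2n}(\Bc_1)=0$. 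The inclusion $\Bc_2\subseteq\Bc_1\cup\Bc_3$ is the geometric core, and the third relation is where Lemma \ref{lemCOV} enters.

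For the inclusion, I would take $(x,y)\in\Bc_2\setminus\Bc_1$, so that $x,y\notin\Sc$ and $[x,y]\cap\Sc$ is infinite. As an infinite subset of the compact segment $[x,y]$, the closed set $[x,y]\cap\Sc$ has an accumulation point $z\in[x,y]\cap\Sc$, which --- since $x,y\notin\Sc$ --- lies in the relative interior of $[x,y]$; in particular $z$ is not isolated in $[x,y]\cap\Sc$. Choosing a neighborhood $V$ of $z$ on which $\Sc$ is the zero set of a $C^1$ function $F$ with $\nabla F\neq\0$, so that $\n(z)$ is parallel to $\nabla F(z)$, I would argue by contradiction: if the line $\ell$ through $x$ and $y$ were transversal to $\Sc$ at $z$, i.e.\ $\nabla F(z)\cdot(y-x)\neq 0$, then $t\mapsto F(z+t(y-x))$ would have nonvanishing derivative for $t$ near $0$, hence be strictly monotone there, so $z$ would be the only point of $\ell\cap\Sc$ --- and a fortiori of $[x,y]\cap\Sc$ --- near $z$, contradicting the preceding sentence. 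Thus $(y-x)\cdot\n(z)=0$, i.e.\ $[x,y]$ is tangent to $\Sc$ at $z$, whence $(x,y)\in\Bc_3$.

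For the third relation, I would set $\Ac_\tau:=\{(z,\u,\xi,\eta)\in\Sc\times\Uc_n\times\Real^+\times\Real^-:\u\cdot\n(z)=0\}$. If $(x,y)\in\Bc_3\setminus\Bc_1$ then $x\neq y$, $x,y\notin\Sc$, and there is $z=x+t(y-x)$ with $t\in(0,1)$ such that $\u_0:=(y-x)/|y-x|$ satisfies $\u_0\cdot\n(z)=0$; a direct check gives $(x,y)=\Phi\bigl(z,-\u_0,\,t|y-x|,\,-(1-t)|y-x|\bigr)$, with the argument lying in $\Ac_\tau$ because $(-\u_0)\cdot\n(z)=0$. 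Hence $\Bc_3\subseteq\Bc_1\cup\Phi(\Ac_\tau)$. Applying the change-of-variables inequality \eqref{COV} with $\Ac=\Ac_\tau$ and the positive integrable function $f(x,y):=e^{-|x|^2-|y|^2}$,
\[
\iint_{\Phi(\Ac_\tau)} f \;\le\; \iiiint_{\Ac_\tau}(f\circ\Phi)\,|\xi-\eta|^{n-1}\,|\u\cdot\n(z)|\,dz\,d\u\,d\xi\,d\eta \;=\; 0 ,
\]
the right-hand side vanishing because $\u\cdot\n(z)=0$ on $\Ac_\tau$. Since $f>0$, this forces $\Hc^{2n}(\Phi(\Ac_\tau))=0$, hence $\Hc^{2n}(\Bc_3)=0$, and the proposition follows. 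Here $\Ac_\tau$ is Borel and $\Phi(\Ac_\tau)$ is analytic, hence Lebesgue measurable, so the Area Formula underlying Lemma \ref{lemCOV} applies; by the remark following that lemma no orientability of $\Sc$ is needed. I expect the only genuinely delicate step to be the inclusion $\Bc_2\subseteq\Bc_1\cup\Bc_3$: only $C^1$ regularity is available, so the argument must rely on the monotonicity of $F$ along $\ell$ rather than on a curvature estimate, and one must be a little careful that the accumulation point is interior to the segment; everything else is bookkeeping plus a single invocation of Lemma \ref{lemCOV}.
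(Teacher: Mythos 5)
Your proposal is correct and follows essentially the same route as the paper: condition (1) is null since $\Sc$ is Lebesgue-null in $\Real^n$, condition (2) is reduced to (1) and (3) via a cluster point of $[x,y]\cap\Sc$ and the isolation of transversal intersections for a $C^1$ surface, and condition (3) is killed by applying the change-of-variables inequality of Lemma \ref{lemCOV} to the tangent directions, where the Jacobian factor $|\u\cdot\n(z)|$ vanishes. The only (harmless) cosmetic differences are that you integrate a positive Gaussian weight over all of $\Real^n\times\Real^n$ where the paper truncates to $\Sc_R\times\Sc_R$ and lets $R\to\infty$, and that you spell out the transversality argument with a local defining function.
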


\begin{proof}
Those pairs of points that satisfy Condition 1 are contained in the set $\Sc\times\Real^n\cup \Real^n\times\Sc$, which has $\Hc^{2n}$-measure zero because $\Hc^{n-1}(\Sc)<\infty$.  

Consider now a pair of points $x,y\in\Real^n$ such that Condition  2 holds.  Since both the straight-line segment $[x,y]$ and $\Sc$ are compact, the set of intersection points of $[x,y]$ with $\Sc$ has a cluster point $z$ that is also a point of intersection.  If this cluster point is $x$ or $y$, we are back in the case of Condition 1.  If $z$ is an interior point of $[x,y]$, then either that straight-line segment is tangent to $\Sc$  at that point, and hence we are in the case of Condition 3, or it is not. This latter situation cannot occur, because such points are isolated, in the sense that there is a neighborhood of $z$ in which $[x,y]$  intersects $\Sc$ only once.  {\color{black}This follows from the fact that since $\Sc$ is a $C^1$ surface, there is a neighborhood of $z$ such that $\Sc$ can be approximated by the tangent space $\Tc_z(\Sc)$.  Thus, if a line intersects $\Sc$ at $z$ and is not tangent to $\Sc$ at $z$, then there is a neighborhood of $z$ such that the line only intersects $\Sc$ once in that neighborhood.}  This contradicts the fact that $z$ is a cluster point of intersections.  

To prove the proposition it remains for us to show that the set $\Xc_\text{tan}$ of all pairs of points satisfying Condition  3, has $\Hc^{2n}$-measure zero.  
Let $\Sc_R$ denote the set of all points in $\Real^n$ within a distance $R>0$ from $\Sc$.  Using the function \eqref{PhiCOV}, it follows that
$$
\Xc_\text{tan}\cap(\Sc_R\times\Sc_R)\subset \bigcup_{z\in\Sc}\Phi(\{z\}\times (\Tc_z(\Sc)\cap \Uc_n)\times [0,d_R]\times [-d_R,0]),
$$
where $d_R=\mbox{diameter}(\Sc)+R$  and $\Tc_z(\Sc)$ denotes the tangent space of $\Sc$ at $z$.
Using the change of variables in \eqref{COV}, we have
\begin{align*}
\Hc^{2n}(\Xc_\text{tan})&=\lim_{R\rightarrow \infty}\Hc^{2n}(\Xc_\text{tan}\cap (\Sc_R\times\Sc_R))\\
&=\lim_{R\rightarrow \infty}\int_{\Xc_\text{tan}\cap (\Sc_R\times\Sc_R)}dxdy\\
&\leq\lim_{R\rightarrow \infty}\int_{\Sc} \int_{\Tc_z(\Sc)\cap \Uc_n}\int_{-d_R}^0\int_0^{d_R} |\xi-\eta|^{n-1}|\u\cdot \n(z)| d\xi d\eta d\u dz.
\end{align*}
The last integral is zero  because
$\u \in \Tc_z(\Sc)$.
\end{proof}

No matter if a surface $\Sc$ is the boundary of an open set or not, we can now define $\Xc(\Sc)$  as the set of all pairs of points $(x,y)\in\Real^n\times\Real^n$ such that neither one of the Conditions 2 and 3  in Proposition \ref{intmz} holds and, moreover, the straight-line segment $[x,y]$ has with $\Sc$ an odd number of points in common, not counting its own end points. Proposition \ref{intmz} and the discussion at the beginning of this section guarantee that $\Xc(\partial E)$ and $(E\times\Cc E)\times (\Cc E\times E)$ differ by a set of $\Hc^{2n}$-measure zero, and thus \eqref{perarearel} holds if $E$ has a $C^1$-boundary.

Notice that the right-hand side of \eqref{perarearel} depends on $E$ only through its boundary.  
{\color{black} If we defined}  
 the $s$-area of a compact smooth surface $\Sc$ using the same formula  with $\partial E$ replaced by $\Sc$, then the integral would diverge whenever $\Sc$ is not the boundary of a bounded set.   Thus, similar to what is done for the $s$-perimeter of an unbounded set, we define the $s$-area of $\Sc$ relative to an open and bounded set $\Omega$ by
\beqn\label{sarea}
s\text{-Area}(\Sc,\Omega):=\frac{1}{2}\int_{\Xc(\Sc)}  \kappa(x,y)\max\{\chi_{\Omega}(x),\chi_\Omega(y)\}dxdy.
\eeqn
To see that the $s$-area relative to $\Omega$ is finite, first notice that, since $\kappa(x,y)=\kappa(y,x)$,
\begin{align}
s\text{-Area}(\Sc,\Omega)&=\frac 12 \int_{\Xc(\Sc)}  \kappa(x,y)\chi_{\Omega\times \Omega}(x,y)dxdy+\int_{\Xc(\Sc)}  \kappa(x,y)\chi_{\Omega\times \Cc\Omega}(x,y)dxdy\nonumber\\
&=\frac{1}{2}\int_{\Omega}\int_{\Xc(\Sc,y)\cap\Omega}  \kappa(x,y)dxdy+\int_{\Omega}\int_{\Xc(\Sc,y)\cap\Cc\Omega}\kappa(x,y)dxdy\label{AreasSplit}
\end{align}
where: $\Xc(\Sc,y)=\{x\in \Real^n\ |\ (x,y)\in\Xc(\Sc)\}$. Hence,
\begin{align}
s\text{-Area}(\Sc,\Omega)&
\leq\int_{\Omega}\int_{\Xc(\Sc,y)} \kappa(x,y)dxdy\nonumber\\
&=\int_{\Omega}\int_{\Xc(\Sc,y)\cap \Sc_R}  \kappa(x,y)dxdy+\int_{\Omega}\int_{\Xc(\Sc,y)\setminus \Sc_R} \kappa(x,y)dxdy,\label{sareafinite}
\end{align}
where $\Sc_R$ is the set of all points within a distance $R>0$ from $\Sc$. Now, the integrals on the right in \eqref{sareafinite} turn out to be finite. Indeed, as to the first, choose $R>0$ and large enough so that $\Omega\subset \Sc_R$ and utilize the change of variables in Lemma~\ref{lemCOV} to find that
\begin{align}
\int_{\Omega}\int_{\Xc(\Sc,y)\cap \Sc_R}\frac{1}{|x-y|^{n+2s}}dxdy&\leq \int_{\Sc}\int_{\Uc_n}\int_{-R}^0\int_0^R \frac{|\u\cdot \n(z)|}{|\xi-\eta|^{1+2s}}\, d\xi d\eta d\u dz\nonumber\\
&=\int_\Sc\int_{\Uc_n}\frac{2R^{1-2s}-(2R)^{1-2s}}{2s(1-2s)}|\u\cdot\n(z)|\, d\u dz<\infty;\nonumber
\end{align}
as to the second,  use spherical coordinates centered at $y$ to obtain
\begin{align}
\int_{\Omega}\int_{\Xc(\Sc,y)\setminus \Sc_R}\frac{\omega_{n-1}}{|x-y|^{n+2s}}dxdy\leq \int_\Omega\int_R^\infty\frac{1}{r^{1+2s}}drdy<\infty.\nonumber
\end{align}

Unsurprisingly, the $s$-area of a surface $\Sc$ relative to $\Omega$ satisfies a relation similar to \eqref{perlimit}.  {\color{black} Assume that $\Omega$ is chosen so that $\Sc\subset \Omega$.}

\begin{theorem}\label{convA}
If $\Sc$ is a compact $(n-1)$-dimensional  $C^1$ manifold and $\Omega\subset\Real^n$ is an open and bounded set, then
\beqn\label{slimarea}
\lim_{s\rightarrow 1/2^-} (1-2s)\,s\text{-\rm Area}(\Sc,\Omega)=\text{\rm Area}(\Sc).
\eeqn
\end{theorem}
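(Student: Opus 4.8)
The plan is to carry out a localization-and-change-of-variables argument, reducing the global statement to a pointwise computation on $\Sc$ via the map $\Phi$ of Lemma~\ref{lemCOV}. First I would use the splitting \eqref{AreasSplit}: since $\Sc\subset\Omega$, the second term $\int_\Omega\int_{\Xc(\Sc,y)\cap\Cc\Omega}\kappa(x,y)\,dxdy$ involves only pairs $(x,y)$ with $y\in\Omega$ and $x\notin\Omega$, so the segment $[x,y]$ must cross $\partial\Omega$ and in particular $|x-y|$ is bounded below by $\mathrm{dist}(\Sc,\Cc\Omega)>0$ on the relevant set (for $y$ near $\Sc$) — more carefully, one shows this term stays bounded as $s\to1/2^-$, hence contributes $0$ after multiplication by $(1-2s)$. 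So the whole limit comes from $\tfrac12\int_\Omega\int_{\Xc(\Sc,y)\cap\Omega}\kappa(x,y)\,dxdy$, and since $\Xc(\Sc,y)\setminus\Sc_\delta$ contributes a bounded quantity for any fixed $\delta>0$ (the kernel is integrable away from $\Sc$, uniformly in $s$), it even suffices to understand $\tfrac12\int_{\Xc(\Sc)\cap\Sc_\delta\times\Sc_\delta}\kappa(x,y)\,dxdy$ for small $\delta$.

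Next I would apply the change of variables of Lemma~\ref{lemCOV}. On the set $\Sc_\delta\times\Sc_\delta$, a pair $(x,y)\in\Xc(\Sc)$ with $[x,y]$ crossing $\Sc$ exactly once transversally can be written essentially uniquely (for $\delta$ small, using the tubular neighborhood structure of the $C^1$ manifold) as $\Phi(z,\u,\xi,\eta)$ with $z\in\Sc$ the unique crossing point, $\u$ a unit direction, $\xi>0>\eta$; injectivity holds up to a negligible set, so Lemma~\ref{lemCOV} applies with equality. Thus
\begin{equation*}
\tfrac12\int_{\Xc(\Sc)\cap(\Sc_\delta\times\Sc_\delta)}\frac{dxdy}{|x-y|^{n+2s}}
=\tfrac12\int_\Sc\int_{\Uc_n}\Big(\int\!\!\!\int_{D_{z,\u,\delta}}\frac{|\xi-\eta|^{n-1}}{|\xi-\eta|^{n+2s}}\,d\xi\,d\eta\Big)|\u\cdot\n(z)|\,d\u\,dz + (\text{negligible}),
\end{equation*}
where $D_{z,\u,\delta}\subset\Real^+\times\Real^-$ is the set of $(\xi,\eta)$ for which $z+\xi\u$ and $z+\eta\u$ both lie in $\Sc_\delta$ and $[z+\eta\u,z+\xi\u]$ meets $\Sc$ only at $z$. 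The inner integrand is $|\xi-\eta|^{-(1+2s)}$; integrating and multiplying by $(1-2s)$, the leading behavior as $s\to1/2^-$ is governed by the singularity at $\xi-\eta\to0$, which forces $\xi\to0^+$, $\eta\to0^-$, i.e. both endpoints collapse onto $z$. A direct computation of $\int_0^{a}\int_{-b}^{0}|\xi-\eta|^{-(1+2s)}\,d\eta\,d\xi$ shows $(1-2s)$ times it tends to $1$ as $s\to1/2^-$, provided $a,b>0$ are bounded away from $0$ (which holds since near $z$ the segment stays on one side and the first return to $\Sc$ happens at a uniformly positive distance, by compactness and $C^1$-regularity). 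Hence the bracketed quantity, times $(1-2s)$, converges to $1$ for a.e.\ $(z,\u)$, and a dominated-convergence argument (the domination coming exactly from the finiteness estimates already established in \eqref{sareafinite}) gives
\begin{equation*}
\lim_{s\to1/2^-}(1-2s)\,s\text{-Area}(\Sc,\Omega)=\tfrac12\int_\Sc\int_{\Uc_n}|\u\cdot\n(z)|\,d\u\,dz.
\end{equation*}

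Finally I would evaluate the constant: $\int_{\Uc_n}|\u\cdot\n|\,d\u$ is independent of the unit vector $\n$, and a standard computation gives $\int_{\Uc_n}|\u\cdot\n|\,d\u = 2\alpha_{n-1}$ (twice the volume of the unit ball in $\Real^{n-1}$); recalling $\kappa$ carries the normalizing factor $1/\alpha_{n-1}$, the prefactors combine to leave exactly $\int_\Sc dz = \mathrm{Area}(\Sc)$, which is the claim. The main obstacle I anticipate is the careful bookkeeping in the change of variables near $\Sc$: one must verify that for small $\delta$ the representation $(x,y)=\Phi(z,\u,\xi,\eta)$ is genuinely injective off a null set (no pair is counted twice via two different single crossings), that the ``tangent'' and ``infinitely-many-intersections'' pairs excluded by Proposition~\ref{intmz} are indeed negligible here, and — most delicate — that the cutoff $D_{z,\u,\delta}$ can be replaced, up to $o\big(1/(1-2s)\big)$, by a fixed product neighborhood $(0,a)\times(-b,0)$ with $a,b$ uniformly positive, so that the pointwise limit and the domination both go through uniformly in $z$ and $\u$. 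Everything else is routine once this localization is in place.
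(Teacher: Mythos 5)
Your proposal is correct in outline and follows essentially the same route as the paper: localize near the diagonal, apply the change of variables of Lemma~\ref{lemCOV}, compute the one-dimensional limit $(1-2s)\iint|\xi-\eta|^{-(1+2s)}d\xi d\eta\to 1$, and finish with dominated convergence and $\int_{\Uc_n}|\u\cdot\n|\,d\u=2\alpha_{n-1}$. Two remarks on exactly the points you flag as delicate. First, the paper does not discard multi-crossing pairs as negligible near the diagonal (which would require an extra argument, since segments nearly tangent to $\Sc$ can cross it several times at arbitrarily small separation); instead it fixes a selection function $c:\Xc_\ve(\Sc)\to\Sc$ assigning to each pair one of its crossing points and sets $C_\ve(z,\u)=\{(\xi,\eta)\,:\,(z+\xi\u,z+\eta\u)\in\Xc_\ve(\Sc),\ c(z+\xi\u,z+\eta\u)=z\}$; on the union of these fibers $\Phi$ is injective, so Lemma~\ref{lemCOV} holds with equality and the injectivity issue you identify disappears. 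Second, your claim that the first return of the segment to $\Sc$ occurs at a \emph{uniformly} positive distance, and that the pointwise limit and domination go through ``uniformly in $z$ and $\u$,'' is false: the first-return distance degenerates as $\u$ approaches a tangent direction at $z$. Fortunately uniformity is not needed — for each fixed $(z,\u)$ with $\u\cdot\n(z)\neq 0$ there is an $\ve_0(z,\u)>0$ below which the segment meets $\Sc$ only at $z$, giving a.e.\ pointwise convergence, while the containment $C_\ve(z,\u)\subset\{\xi-\eta\le\ve\}$ yields an integrable dominating function (this is how the paper argues; it also localizes at the $s$-dependent scale $\ve=\sqrt{1-2s}$ rather than in a fixed tubular neighborhood $\Sc_\delta$, though either works). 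Your constant evaluation is correct; note only that your displayed limit omits the factor $\alpha_{n-1}^{-1}$ carried by $\kappa$, which you then restore in words.
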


\begin{proof}
To begin with, set $\ve=\sqrt{1-2s}$, so that as $s$ goes to $1/2$ from the left, $\ve$ goes to zero from the right.  Put
\begin{equation}
\Xc_\ve(\Sc)=\{(x,y)\in \Xc(\Sc)\ |\ |x-y|<\ve \}.\nonumber
\end{equation}
Notice that
\begin{align*}
&\lim_{s\rightarrow 1/2^-}\int_{\Xc(\Sc)\setminus \Xc_\ve(\Sc)}  \frac{1-2s}{|x-y|^{n+2s}}\max\{\chi_{\Omega}(x),\chi_\Omega(y)\}\, dxdy\\
&\hspace{2in}\leq \lim_{s\rightarrow 1/2^-}2\int_\Omega \int_{\{x\in\Real^n |\, |x-y|\geq \ve\}} \frac{1-2s}{|x-y|^{n+2s}}\, dxdy\\
&\hspace{2in}= \lim_{s\rightarrow 1/2^-}2\omega_{n-1}\int_\Omega \int_\ve^\infty \frac{1-2s}{r^{1+2s}}\, drdy\\
&\hspace{2in}=0.
\end{align*}
{\color{black}Since $\max\{\chi_{\Omega}(x),\chi_\Omega(y)\}=1$ for $(x,y)\in \Xc_\ve(\Sc)$ and small $\ve$, it follows that}
\begin{align}
\lim_{s\rightarrow 1/2^-} (1-2s)\,s\text{-Area}(\Sc,\Omega) 
\label{slimarea1}&=\lim_{s\rightarrow 1/2^-} \frac{1}{2\alpha_{n-1}}\int_{\Xc_\ve(\Sc)} \frac{1-2s}{|x-y|^{n+2s}}\, dxdy,
\end{align}
{\color{black}where, as previously defined, $\alpha_{n-1}$ is the volume of the unit ball in $\Real^{n-1}$.}
Even for $s$ close to 1/2, and hence $\ve$ close to $0$, it is possible for the straight-line segment connecting a pair of points $(x,y)\in\Xc_\ve(\Sc)$ to cross $\Sc$ more than once; hence, such a pair $(x,y)$ is not naturally associated with a unique point on the surface $\Sc$.  However, for each pair of points $(x,y)\in\Xc_\ve(\Sc)$ we can arbitrarily choose a point $z\in \Sc$ that lies on the straight-line segment joining $x$ and $y$.  Denote this point by $c(x,y)$.  One can think of $c$ as a function from $\Xc_\ve(\Sc)$ to $\Sc$ that singles out a particular crossing for the pair $(x,y)\in \Xc_\ve(\Sc)$.  There are many such functions, here we choose one.  For each $z\in\Sc$ and for each  $\bu\in\Uc_n$, set
\begin{align}
C_\ve(z,\u):=\{(\xi,\eta)\in\Real^+\times\Real^-\ | &\ (z+\xi \u,z+\eta \u)\in\Xc_\ve(\Sc)\ \nonumber\\
&\qquad\quad\text{and}\ c(z+\xi \u,z+\eta \u)=z \}.\label{Cve}
\end{align}
Notice that the function $\Phi$ defined in Lemma \ref{lemCOV} is injective on the set
$$\bigcup_{(z,\bu)\in \Sc\times\Uc_n} \{z\}\times\{\u\} \times C_\ve(z,\u).$$
To see this, consider two quadruplets $(z_1,\u_1,\xi_1,\eta_1)$ and $(z_2,\u_2,\xi_2,\eta_2)$  in this set  that 
$$\Phi(z_1,\u_1,\xi_1,\eta_1)=\Phi(z_2,\u_2,\xi_2,\eta_2)=(x,y)\in \Xc_\ve(\Sc).$$
We know that the straight-line segment $[x,y]$ crosses $\Sc$ an odd number of times.  Since both quadruplets gets mapped to $(x,y)$   and $(\xi_1,\eta_1)$ and $(\xi_2,\eta_2)$ belong to $C_\ve(z,\u)$, we must have that
$$c(z_1+\xi_1 \u_1 , z_1 + \eta_1 \u_1)=z=c(z_2+\xi_2 \u_2 , z_2 + \eta_2 \u_2).$$  
It then follows from \eqref{Cve} that $z=z_1=z_2$.  Moreover, since $\xi_1$ and $\xi_2$ are positive,  the equality chain 
$$x=z+\xi_1\u_1=z+\xi_2 \u_2$$ 
holds only for $\xi_1=\xi_2$ and $\u_1=\u_2$;  similarly, it follows that $\eta_1=\eta_2$.  Thus, $\Phi$ is injective.

Now, using the change of variables \eqref{COV} with equality sign, we have that {\color{black}
\begin{align}
\frac{1}{2}\int_{\Xc_\ve(\Sc)}\frac{1-2s}{|x-y|^{n+2s}}\, dxdy &\,  = \, \frac{1}{2}\int_{\Sc}\int_{\Uc_n} \int_{C_\ve(z,\u)} \frac{1-2s}{|\xi-\eta|^{1+2s}} |\u\cdot \n(z)| d\xi d\eta d\u dz\nonumber\\
&\hspace{-1in}= \, \frac{1}{2}\int_{\Sc}\int_{\{\u\in\Uc_n\, |\, \u\cdot \n(z)>0\}} \int_{C_\ve(z,\u)} \frac{1-2s}{|\xi-\eta|^{1+2s}} \u\cdot \n(z) d\xi d\eta d\u dz\nonumber\\
&\hspace{-0.9in}\,- \, \frac{1}{2}\int_{\Sc}\int_{\{\u\in\Uc_n\, |\, \u\cdot \n(z)<0\}} \int_{C_\ve(z,\u)} \frac{1-2s}{|\xi-\eta|^{1+2s}} \u\cdot \n(z) d\xi d\eta d\u dz.\label{unminus}
\end{align}}
For each $z\in\Sc$ and $\u\in\Uc_n$ such that $\u\cdot\n(z)>0$ there exists an $\ve_0>0$
such that the segment $\{z+s\u\ |\  |s|\le \ve_0\}$ intersects the surface $\Sc$ only at $z$. Hence, for 
$\ve\le \ve_0$ we have that
$$
C_\ve(z,\u)=\{(\xi,\eta)\in\Real^+\times\Real^-\ |\ \xi-\eta\leq \ve \},
$$
so that
\begin{align*}
\lim_{s\rightarrow 1/2^-} \int_{C_\ve(z,\u)} \frac{1-2s}{|\xi-\eta|^{1+2s}} d\xi d\eta&= \lim_{s\rightarrow 1/2^-} \int_{-\ve}^0\int_0^{\ve+\eta} \frac{1-2s}{|\xi-\eta|^{1+2s}} d\xi d\eta \\
&= \lim_{s\rightarrow 1/2^-} \frac{\ve^{1-2s}}{2s}  
= 1.
\end{align*}
{\color{black}The same result can be reached for  $z\in\Sc$ and $\u\in\Uc_n$ such that $\u\cdot\n(z)<0$.}
These facts together with the dominated convergence theorem allows for the calculation of the limit of \eqref{unminus}.  Namely, if $B^{n-1}$ is the unit ball in $\Real^{n-1}$, then
\begin{align*}
\lim_{s\rightarrow 1/2^-}\frac{1}{2\alpha_{n-1}}\int_{\Xc_\ve(\Sc)}&\frac{1-2s}{|x-y|^{n+2s}}\, dxdy \\ & ={\color{black}\frac{1}{2\alpha_{n-1}}\int_{\Sc}\int_{\{\u\in\Uc_n\, |\, \u\cdot \n(z)>0\}} \u\cdot \n(z)d\u dz}\\
&\hspace{5mm} {\color{black}-\frac{1}{2\alpha_{n-1}}\int_{\Sc}\int_{\{\u\in\Uc_n\, |\, \u\cdot \n(z)<0\}} \u\cdot \n(z)d\u dz}\\
& =\frac{1}{\alpha_{n-1}}\int_{\Sc}\int_{\{\u\in\Uc_n\, |\, \u\cdot \n(z)>0\}} \u\cdot \n(z)d\u dz\\
&=\frac{1}{\alpha_{n-1}}\int_{\Sc}\int_{B^{n-1}}\int_{0}^{\pi/2} \sin(\theta)d\theta dA dz\\
&= \text{Area}(\Sc).
\end{align*}
Putting this together with \eqref{slimarea1} shows that \eqref{slimarea} holds.

\end{proof}

\section{The first variation of the $s$-area functional}

Motivated by the connection between the local mean curvature of a surface and the first variation of the area functional,  we calculate the first variation of the $s$\text{-area} functional.

From now on in this section we restrict attention to compact $(n-1)$-dimensional $C^1$ manifold in $\Real^n$, which we choose to be orientable. Let $\Sc$ be such a surface with $\bn$ its chosen normal field,  let $\Omega$ be an open bounded set that contains $\Sc$, and let 
 $\phi:\Sc\rightarrow\Real$ be a normal variation of $\Sc$, that is, a continuously differentiable function that is zero on $\partial\Sc$.  
For  $\ve>0$, define
$$\Sc_\ve:=\{z+\ve\phi(z)\bn(z)\ |\ z\in\Sc\};$$
note that $\partial \Sc_\ve=\partial\Sc$ and that $\Sc_\ve$ is
a compact $(n-1)$-dimensional manifold for small $\ve$.
We wish to find a characterization of those $\Sc$ that satisfy 
\beqn\label{varlim}
\lim_{\epsilon\rightarrow 0^+} \frac{s\text{-Area}(\Sc_\ve,\Omega)-s\text{-Area}(\Sc,\Omega)}{\ve}=0
\eeqn
 for all normal variations $\phi$.

\begin{figure}[h]
\centering
\includegraphics[width=4in]{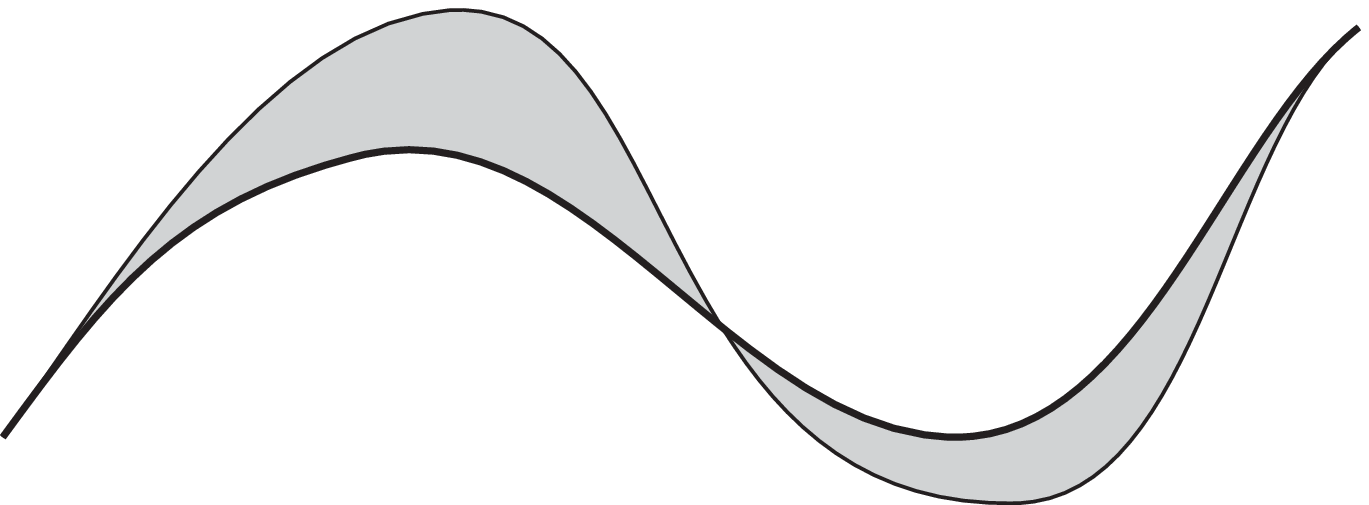}
\thicklines
\put(-40,15){$\Sc_\ve$}
\put(-50,50){$\Sc$}
\put(-220,80){$\Vc_\ve$}
\put(-200,85){\rotatebox[origin=c]{78}{$\vector(1,0){30}$}}
\put(-180,105){$z+\ve \phi(z)\bn(z), \  \phi(z)>0$}
\put(-186,102){$\bullet$}
\put(-193,72){$\bullet$}
\put(-193,65){$z$}
\put(-90,21){$\bar z$}
\put(-89.5,12.5){$\bullet$}
\put(-87.5,3){\rotatebox[origin=c]{270}{$\vector(1,0){14}$}}
\put(-89.5,-2){$\bullet$}
\put(-110,-13){$\bar z+\ve \phi(\bar z)\bn(\bar z), \ \phi(\bar z)<0$}
\caption{A depiction of $\Sc$, $\Sc_\ve$, and $\Vc_\ve$.}
\label{Vepsilon}
\end{figure}

Let $\Vc_\ve$ denote the region inclosed by the surfaces $\Sc_\ve$ and $\Sc$, {\color{black} so that
$$ \Vc_\ve:=\{ z+\zeta \bn\ |\ z\in\Sc,\, \phi(z)\not=0,\  0< \zeta/\phi(z)< \ve \}$$}
(see Figure~\ref{Vepsilon}), and let
$$\Wc_\ve:=(\Vc_\ve\times\Cc\Vc_\ve)\cup(\Cc\Vc_\ve\times\Vc_\ve).$$ 
In view of Proposition \ref{intmz}, for $\Hc^{2n}$-almost every pair $(x,y)\in\Wc_\ve$, the number of points the straight-line segment connecting $x$ and $y$ has in common with $\Sc$ (not counting its  end points) differs by {\color{black} an odd number} from the number of points it has in common with $\Sc_\ve$. 
{\color{black}
Indeed, for $\Hc^{2n}$-almost every pair $(x,y)\in\Wc_\ve$ the segment $[x,y]$ intersects $\partial \Vc_\ve$ an odd number of times; let us denote by $\#_{\partial \Vc_\ve}$ this odd number. Let $\#_{\Sc}$ and $\#_{\Sc_\ve}$ denote the number of times that $[x,y]$ intersect $\Sc$ and $\Sc_\ve$, respectively. Since $\partial \Vc_\ve=\Sc\cup\Sc_\ve$ we have that  $\#_{\partial \Vc_\ve}=\#_{\Sc}+\#_{\Sc_\ve}$. But the parity of $\#_{\Sc}-\#_{\Sc_\ve}$ coincides with the parity of $\#_{\Sc}+\#_{\Sc_\ve}$ and hence it is odd just like $\#_{\partial \Vc_\ve}$.
}

 Thus, up to a set of $\Hc^{2n}$-measure zero,
\beqn\label{Xcep}
\Xc(\Sc_\ve)=\big(\Cc\Xc(\Sc)\cap\Wc_\ve\big)\cup\big(\Xc(\Sc)\backslash\Wc_\ve\big).
\eeqn
{\color{black}
With 
$$
D_\eta:=\{(x,y)| \, |x-y|>\eta\},
$$
for $\eta>0$, i}t follows that, for 
$$f(x,y):=|x-y|^{-n-2s}\max\{\chi_{\Omega}(x),\chi_{\Omega}(y)\}{\color{black}\chi_{D_\eta}(x,y)}=f(y,x),$$ 
we have that
\begin{align*}
&\Big(\int_{\Xc(\Sc_\ve)} - \int_{\Xc(\Sc)}\Big)f(x,y)\, dxdy\\
&\hspace{1in}= \Big(\int _{\Cc\Xc(\Sc)\cap\Wc_\ve}+\int_{\Xc(\Sc)\backslash\Wc_\ve} - \int_{\Xc(\Sc)\backslash\Wc_\ve} - \int_{\Xc(\Sc)\cap\Wc_\ve}\Big)f(x,y)\, dxdy\\
&\hspace{1in}=  \Big(\int _{\Cc\Xc(\Sc)\cap\Wc_\ve}- \int _{\Xc(\Sc)\cap\Wc_\ve}\Big) f(x,y)\, dxdy.
\end{align*}
Hence, \eqref{varlim} is equivalent to the condition
\beqn\label{varlim2}
\lim_{\ve\rightarrow 0^+}\frac{1}{\ve} \Big( \int_{\Cc\Xc(\Sc)\cap\Wc_\ve} f(x,y)\, dxdy - \int_{\Xc(\Sc)\cap\Wc_\ve} f(x,y)\, dxdy\Big)=0.
\eeqn
Moreover, as $f(x,y)=f(y,x)$,
\begin{align*}
\int_{\Cc\Xc(\Sc)\cap\Wc_\ve} f(x,y)\, dxdy & = \Big(\int_{\Cc\Xc(\Sc)\cap(\Vc_\ve\times\Cc\Vc_\ve)} + \int_{\Cc\Xc(\Sc)\cap(\Cc\Vc_\ve\times\Vc_\ve)}\Big)f(x,y)\, dxdy\\
&=2\int_{\Cc\Xc(\Sc)\cap(\Vc_\ve\times\Cc\Vc_\ve)} f(x,y)\, dxdy\\
&= 2\int_{\Vc_\ve}\int_{\{y\in\Cc\Vc_\ve\, |\, (x,y)\in\Cc\Xc(\Sc)\}}f(x,y)\, dydx;
\end{align*}
similarly,
$$
\int_{\Xc(\Sc)\cap\Wc_\ve} f(x,y)\, dxdy = 2\int_{\Vc_\ve}\int_{\{y\in\Cc\Vc_\ve\, |\, (x,y)\in\Xc(\Sc)\}}f(x,y)\, dydx.
$$

Now, for all $x\in\Vc_\ve$ define
\begin{align*}
f^{\Cc\Xc}_\ve(x)&:=\int_{\{y\in\Cc\Vc_\ve\, |\, (x,y)\in\Cc\Xc(\Sc)\}}f(x,y)\, dy,\\
f^{\Xc}_\ve(x)&:=\int_{\{y\in\Cc\Vc_\ve\, |\, (x,y)\in\Xc(\Sc)\}}f(x,y)\, dy,
\end{align*}
and, for $z\in\Sc$, define
\begin{align*}
\Ac_e(z,\phi)&:=\big\{y\in \Real^n\ |\ \big((z,y)\in \Xc(\Sc)\ \text{and}\ \phi(z)(z-y)\cdot \n(z)> 0\big)\\
&\hspace{1in}\text{or } \big((z,y)\in \Cc\Xc(\Sc)\ \text{and}\ \phi(z)(z-y)\cdot \n(z)< 0\big)\big\},\\
\Ac_i(z,\phi)&:=\big\{y\in \Real^n\ |\ \big((z,y)\in \Cc\Xc(\Sc)\ \text{and}\ \phi(z)(z-y)\cdot \n(z)> 0\big)\\
&\hspace{1in}\text{or } \big((z,y)\in \Xc(\Sc)\ \text{and}\ \phi(z)(z-y)\cdot \n(z)< 0\big)\big\},
\end{align*}
(see Figures~\ref{AIAE} and \ref{AIAEcolor}).  {\color{black}When $\phi(z)\not=0$,} the sets $\Ac_e(z,\phi)$ and $\Ac_i(z,\phi)$ complement each other up to a set of $\Hc^{n}$-measure zero.  {\color{black} Moreover, using Proposition~3.62 in \cite{AFP}, it can be shown that $\Ac_e(z,\phi)$ and $\Ac_i(z,\phi)$ locally have finite perimeter and so have an exterior unit normal on their reduced boundary.} In particular, 
\begin{figure}[h]
\centering
\includegraphics[width=4in]{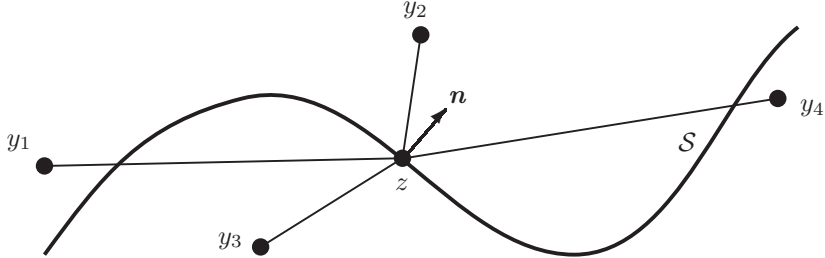}
\thicklines
\put(-158,45){\rotatebox[origin=c]{49}{$\vector(1,0){25}$}}
\put(-46,40){$\Sc$}
\put(-153,25){$z$}
\put(-133,58){$\n$}
\put(-300,42){$y_1$}
\put(-150,92){$y_2$}
\put(-220,5){$y_3$}
\put(0,55){$y_4$}
\caption{Here, $\phi(z)>0$; $y_1, y_2 \in \Ac_e(z,\phi)$ and $y_3,y_4\in \Ac_i(z,\phi)$.}
\label{AIAE}
\end{figure}
if $\Sc$ is the boundary of a set $E$, $\bn$ is the exterior normal, and $\phi>0$, then, up to a set of $\Hc^n$-measure zero, $\Ac_e(z,\phi)$ consists of the points outside of $E$ and $\Ac_i(z,\phi)$ consists of the points in $E$.

We state a useful generalization of a result by Weyl \cite{Weyl} {\color{black}(third to last formula on page 464)}, {\color{black} which can be obtained by use of the area formula}: given an integrable function $g$ defined on $\Vc_\ve$, we have
\beqn\label{neededint}
\int_{\Vc_\ve}g(x)\, dx = \int_{\Sc}\int_{-\ve\phi^-(z)}^{+\ve\phi^+(z)} g(z+\xi\bn(z)) \text{det}_{\Tc_z(\Sc)}(\1-\xi\L(z))\, d\xi dz
\eeqn
where $\phi^+$ and $\phi^-$ are the positive and negative parts of $\phi$, $\text{det}_{\Tc_z(\Sc)}$ is the determinant function for linear mappings from $\Tc_z(\Sc)$, the tangent space of $\Sc$ at $z$, into itself, and $\L$ is the curvature tensor for $\Sc$, {\color{black}which is defined as $-\nabla_\Sc \bn$, the surface gradient of the normal vector field.}

With the use of \eqref{neededint}, the limit in \eqref{varlim2} can be computed:
\begin{align*}
\lim_{\ve\rightarrow 0^+} \frac{1}{\ve} \int_{\Cc\Xc(\Sc)\cap\Wc_\ve} f(x,y)\, dxdy &= \lim_{\ve\rightarrow 0^+} \frac{2}{\ve}\int_{\Vc_\ve} f^{\Cc\Xc}_\ve(x)\, dx\\
&\hspace{-1.5in}= 2\lim_{\ve\rightarrow 0^+}\int_{\Sc} \frac{1}{\ve}\int_{-\ve\phi^-(z)}^{+\ve\phi^+(z)} f^{\Cc\Xc}_\ve(z+\xi\bn(z)) \text{det}_{\Tc_z(\Sc)}(\1-\xi\L(z))\, d\xi dz\\
&\hspace{-1.5in}= 2\lim_{\ve\rightarrow 0^+}\int_{\Sc} \int_{-\phi^-(z)}^{+\phi^+(z)} f^{\Cc\Xc}_\ve(z+\ve\zeta\bn(z)) \text{det}_{\Tc_z(\Sc)}(\1-\ve \zeta\L(z))\, d\zeta dz.
\end{align*}
{\color{black} For every $z\in \Sc$ and $\zeta\in \Real$ such that $\phi(z)\zeta>0$, let
$$
E_\ve:=\{y\in\Cc\Vc_\ve\, |\, (z+\ve\zeta\bn(z),y)\in\Cc\Xc(\Sc)\}.
$$ 
Then, $\chi_{E_\ve}\to \chi_{\Ac_e(z,\phi)}$ in $L^1(\Real^n)$
and hence, for almost every $z$,
$$
\lim_{\ve\to 0}f^{\Cc\Xc}_\ve(z+\ve\zeta\bn(z))= \int_{\Ac_e(z,\phi)} f(z,y)\,dy.
$$
Since 
$$
|f(x,y)|\le \frac{1}{\eta^{n+2s}}\max\{\chi_{\Omega}(x),\chi_{\Omega}(y)\},
$$
by the dominated convergence theorem, w}e conclude that
\begin{align*}
\lim_{\ve\rightarrow 0^+} \frac{1}{\ve} \int_{\Cc\Xc(\Sc)\cap\Wc_\ve} f(x,y)\, dxdy &=
 2\int_{\Sc} \int_{-\phi^-(z)}^{+\phi^+(z)} \int_{\Ac_e(z,\phi)} f(z,y)\,dy \, d\zeta dz\\
&\hspace{-1.5in} 
 = 2\int_\Sc (\phi^+(z)+ \phi^-(z)) \int_{\Ac_e(z,\phi)} f(z,y)\,dy dz\\
&\hspace{-1.5in} = 2\int_\Sc  |\phi(z)| \int_{\Ac_e(z,\phi)}f(z,y)\, dydz.
\end{align*}
Similarly,
$$
\lim_{\ve\rightarrow 0^+} \frac{1}{\ve} \int_{\Cc\Xc(\Sc)\cap\Wc_\ve} f(x,y)\, dxdy= 2\int_\Sc  |\phi(z)| \int_{\Ac_i(z,\phi)}f(z,y)\, dydz.
$$
Thus, \eqref{varlim2} is equivalent to
\beqn\label{varlim3}
\int_\Sc  |\phi(z)| \int_{\Ac_e(z,\phi)}f(z,y)\, dydz = \int_\Sc  |\phi(z)| \int_{\Ac_i(z,\phi)}f(z,y)\, dydz,
\eeqn
a condition which holds whatever $\phi$ if and only if, for all $z\in\Sc$,
\beqn\label{varlim4}
\int_{\Ac_e(z,1)}f(z,y)\, dy = \int_{\Ac_i(z,1)}f(z,y)\, dy,
\eeqn
{\color{black} where $\Ac_i(z,1):=\Ac_i(z,\phi(\cdot)=1)$ and $\Ac_e(z,1):=\Ac_e(z,\phi(\cdot)=1).$} Equation \eqref{varlim4} is found by fixing a $z\in\Sc$ and considering variations induced by a positive-valued $\phi$,  whose support is contained in a small neighborhood of $z$.  Considering variations associated with negative-valued $\phi$ would lead to the same condition, because
$$
\Ac_e(z,-1)=\Ac_i(z,1)\ \text{and}\ \Ac_i(z,-1)=\Ac_e(z,1).
$$
{\color{black} Recalling the definition of $f$ and that $z\in \Sc\subset \Omega$,  \eqref{varlim4} writes as
$$
\int_{\Ac_e(z,1)\setminus B_\eta(z)}|z-y|^{-n-2s}\, dy = \int_{\Ac_i(z,1)\setminus B_\eta(z)}|z-y|^{-n-2s}\, dy,
$$
and letting $\eta$ go to zero we find the  following  result.}
\begin{theorem}
	Let $\Sc$ be an orientable compact $(n-1)$-dimensional $C^1$ manifold in $\Real^n$ and let $\Omega$ be an open bounded set that contains $\Sc$.  A necessary and sufficient condition for the vanishing  of the first variation  with respect to surfaces with the same boundary   of the $s$-area {\color{black} of $\Sc$} relative to $\Omega$ is 
\beqn\label{fv}
{\color{black}PV\!\!\!}\int_{\Ac_e(z,1)}|z-y|^{-n-2s}\, dy = {\color{black}PV\!\!\!}\int_{\Ac_i(z,1)}|z-y|^{-n-2s}\, dy,
\eeqn
for each $z\in\Sc$.
\end{theorem}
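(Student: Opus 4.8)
The plan is to organize the computations of this section into a single chain of equivalences linking the first-variation condition \eqref{varlim} to the pointwise identity \eqref{fv}. The starting observation is geometric: by Proposition~\ref{intmz}, for $\Hc^{2n}$-almost every pair $(x,y)$ the segment $[x,y]$ meets $\Sc$, $\Sc_\ve$ and $\partial\Vc_\ve=\Sc\cup\Sc_\ve$ in finitely many, non-tangential, points, and the parity identity $\#_{\partial\Vc_\ve}=\#_\Sc+\#_{\Sc_\ve}$ shows that the crossing parity flips between $\Sc$ and $\Sc_\ve$ exactly on $\Wc_\ve=(\Vc_\ve\times\Cc\Vc_\ve)\cup(\Cc\Vc_\ve\times\Vc_\ve)$; this is recorded as the symmetric-difference identity \eqref{Xcep}, valid up to a $\Hc^{2n}$-null set. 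To tame the singularity of the kernel I would fix $\eta>0$ and work with the truncated symmetric weight $f(x,y)=|x-y|^{-n-2s}\max\{\chi_\Omega(x),\chi_\Omega(y)\}\chi_{D_\eta}(x,y)$; substituting \eqref{Xcep} and cancelling the common part over $\Xc(\Sc)\setminus\Wc_\ve$ turns the difference quotient defining the first variation into $\ve^{-1}$ times the difference of the $f$-integrals over $\Cc\Xc(\Sc)\cap\Wc_\ve$ and over $\Xc(\Sc)\cap\Wc_\ve$, so that \eqref{varlim} becomes \eqref{varlim2}.

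The analytic heart is the limit $\ve\to0^+$. Using $f(x,y)=f(y,x)$, each integral over $\Wc_\ve$ reduces to twice an integral over $\Vc_\ve$ of an inner integral in $y$, namely of $f^{\Cc\Xc}_\ve$ respectively $f^\Xc_\ve$. I would then apply the Weyl-type change of variables \eqref{neededint} to rewrite $\int_{\Vc_\ve}(\,\cdot\,)\,dx$ as $\int_\Sc\int_{-\ve\phi^-(z)}^{+\ve\phi^+(z)}(\,\cdot\,)\,\det_{\Tc_z(\Sc)}(\1-\xi\L(z))\,d\xi\,dz$ and rescale $\xi=\ve\zeta$, so that the $\zeta$-domain becomes $\ve$-independent. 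As $\ve\to0$ the Jacobian factor $\det_{\Tc_z(\Sc)}(\1-\ve\zeta\L(z))\to1$, the base point $z+\ve\zeta\bn(z)\to z$, the region $\Cc\Vc_\ve$ exhausts $\Real^n$, and — since $\Sc$ is $C^1$, so the crossing parity of a segment is stable under a small displacement of one endpoint off a null set — the characteristic function $\chi_{\{y\in\Cc\Vc_\ve\,:\,(z+\ve\zeta\bn(z),y)\in\Cc\Xc(\Sc)\}}$ converges in $L^1(\Real^n)$ to $\chi_{\Ac_e(z,\phi)}$ for a.e.\ $z$ (and similarly for $\Xc$ and $\Ac_i$). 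Because $|f|\le\eta^{-n-2s}\max\{\chi_\Omega(x),\chi_\Omega(y)\}$ is integrable, two applications of the dominated convergence theorem (inner in $y$, outer in $z$ and $\zeta$) give $\lim_\ve f^{\Cc\Xc}_\ve(z+\ve\zeta\bn(z))=\int_{\Ac_e(z,\phi)}f(z,y)\,dy$, and integrating the constant in $\zeta$ over an interval of length $\phi^+(z)+\phi^-(z)=|\phi(z)|$ shows that \eqref{varlim2} is equivalent to \eqref{varlim3}.

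It remains to localize and to remove the truncation. Since $\phi$ ranges over all $C^1$ normal variations vanishing on $\partial\Sc$, testing \eqref{varlim3} with $\phi\ge0$ supported in an arbitrarily small neighborhood of a fixed point, together with the continuity (or Lebesgue-point property) of $z\mapsto\int_{\Ac_e(z,1)}f(z,y)\,dy-\int_{\Ac_i(z,1)}f(z,y)\,dy$, shows that \eqref{varlim3} for all $\phi$ is equivalent to the pointwise identity \eqref{varlim4} at every $z\in\Sc$; the reverse implication is immediate after noting $\Ac_e(z,\phi)=\Ac_e(z,1)$ or $\Ac_i(z,1)$ according to the sign of $\phi(z)$. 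Variations with $\phi\le0$ give nothing new because $\Ac_e(z,-1)=\Ac_i(z,1)$ and $\Ac_i(z,-1)=\Ac_e(z,1)$. Finally, as $z\in\Sc\subset\Omega$ the weight $\max\{\chi_\Omega(z),\chi_\Omega(y)\}$ equals $1$ for $y$ near $z$, so \eqref{varlim4} reads $\int_{\Ac_e(z,1)\setminus B_\eta(z)}|z-y|^{-n-2s}\,dy=\int_{\Ac_i(z,1)\setminus B_\eta(z)}|z-y|^{-n-2s}\,dy$; letting $\eta\to0$, the individually divergent near-$z$ contributions of the two sides cancel since $\Ac_e(z,1)$ and $\Ac_i(z,1)$ partition a punctured neighborhood of $z$ up to a null set, and both sides converge to the principal values in \eqref{fv}.

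The step I expect to require the most care is the $L^1$-convergence under the change of variables: for a.e.\ $z\in\Sc$ and a.e.\ $\zeta$ with $\phi(z)\zeta>0$ one must show that $\{y\in\Cc\Vc_\ve\,:\,(z+\ve\zeta\bn(z),y)\in\Cc\Xc(\Sc)\}$ converges in measure to $\Ac_e(z,\phi)$, which combines $\Vc_\ve\downarrow\emptyset$ with the stability of the crossing parity of a segment under a perturbation of one endpoint, valid away from the $\Hc^{2n}$-null set controlled by Proposition~\ref{intmz}. The truncation $\chi_{D_\eta}$ is inserted precisely to provide the dominating function for the ensuing applications of dominated convergence, and the only subtlety in the concluding $\eta\to0$ passage is the existence of the symmetric principal value, which again follows from the complementarity of $\Ac_e(z,1)$ and $\Ac_i(z,1)$ near $z$.
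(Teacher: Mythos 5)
Your proposal is correct and follows essentially the same route as the paper's own derivation in Section 4: the parity identity on $\Wc_\ve$ giving \eqref{Xcep}, the cancellation reducing the first variation to \eqref{varlim2}, the symmetry of $f$, the Weyl change of variables \eqref{neededint} with the rescaling $\xi=\ve\zeta$, dominated convergence to reach \eqref{varlim3}, localization in $\phi$ to obtain \eqref{varlim4}, and the final passage $\eta\to0$ yielding the principal values in \eqref{fv}. The points you flag as delicate (the $L^1$-convergence of the indicator functions to $\chi_{\Ac_e(z,\phi)}$ and the role of the truncation $\chi_{D_\eta}$ as the dominating function) are precisely the steps the paper itself singles out.
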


\section{Nonlocal curvatures of surfaces}

Motivated by condition \eqref{fv} for the vanishing of the first variation of the $s$-area relative to $\Omega$, we define  as follows the nonlocal mean curvature of an orientable $C^1$ surface $\Sc$, {\color{black} which need not be compact}, at its point $z$:
\beqn\label{Hs}
H_s(z):=\frac{1}{\omega_{n-2}}{PV\!\!\!}\int_{\Real^n}  \widehat\chi_{\Sc}(z,y)|z-y|^{-n-2s}\, dy,
\eeqn
where, for $x\in\Real^n$,
\begin{equation}\label{hatcar}
\widehat\chi_\Sc(z,x):=\left\{
\begin{array}{cc}
+1 & \textrm{if}\; x\in \Ac_i(z,1), \\[6pt]
-1 & \textrm{if}\; x\in \Ac_e(z,1).

\end{array}\right.
\end{equation}
Notice that $H_s(z)$ does not depend on the choice of $\Omega$.
When the surface $\Sc$ is the boundary of an open set,  {\color{black}formulas \eqref{Hs}-\eqref{hatcar} are consistent with  formulas \eqref{defAV}-\eqref{tildecar}} holding for surfaces {\color{black} without boundary}. 

Cabr\'e et al.~\cite{Cetal} noticed that
\beqn\label{divID}
|z-y|^{-(n+2s)}=\frac{1}{2s}\text{div}_y\big[|z-y|^{-(n+2s)}(z-y)\big],
\eeqn
which, together with the divergence theorem, allows the {\color{black}nonlocal-mean-curvature formula \eqref{defAV}} for a  surface {\color{black} without boundary} to be written as
$$
H_s(z)=\frac{1}{s\,\omega_{n-2}} PV\!\!\!\int_{\partial E} |z-y|^{-(n+2s)}(z-y)\cdot \bn(y)\,dy.
$$
{\color{black}We now show that an analogous result  also holds for formula \eqref{Hs}.  

\color{black}
\begin{proposition}\label{Hrep}
Let $\Sc$ be an oriented $C^1$ surface. For $z\in\Sc$,  let $\n_{\Ac_i}$ be the outward unit normal to $\Ac_i(z,1)$.  The nonlocal mean curvature of $\Sc$ at $z$ satisfies
\beqn\label{33}
H_s(z)=\frac{1}{s\,\omega_{n-2}} PV\!\!\!\int_{\Sc} |z-y|^{-(n+2s)}(z-y)\cdot \bn_{\Ac_i}(y)\,dy.
\eeqn
\end{proposition}

\begin{proof}
We start by noticing that $\Sc$ is contained in the boundary $\partial\Ac_i(z,1)$ of $\Ac_i(z,1)$, and that
if
\beqn\label{yn0}
y\in \partial\Ac_i(z,1)\setminus (\Sc\cup\partial\Sc)   \Longrightarrow (z-y)\cdot \n_{\Ac_i}(y)=0.
\eeqn
Indeed, assume that there is a {\color{black}point} $y\in \partial\Ac_i(z,1)\setminus (\Sc\cup\partial\Sc)$ for which $(z-y)\cdot \n_{\Ac_i}(y)\ne0$.
Then, there exists an $\varepsilon>0$ such that {\color{black}$B(y,\varepsilon)$, the ball  of  radius $\varepsilon$ centered at $y$,} does not intersect the surface $\Sc$. Also, from $(z-y)\cdot \n_{\Ac_i}(y)\ne0$
we deduce that there are two points $y^i,y^e\in B(y,\varepsilon)$ that lie on the straight line passing trough the points $z$ and $y$ and such that $y^i\in \Ac_i(z,1)$ and $y^e\notin \Ac_i(z,1)$. 
Since $B(y,\varepsilon)\cap\Sc=\emptyset$, either both $(z,y^i)$ and $(z,y^e)$ belong to $\Xc(\Sc)$ or {\color{black}they both belong} to $\Cc\Xc(\Sc)$. But since
$$
(z-y^i)\cdot \n(z)=(z-y^e)\cdot \n(z),
$$
because $y^i$ and $y^e$ lie on the line passing trough the points $z$ and $y$,
we deduce that both  points $y^i$ and $y^e$  {\color{black}belong either} to $\Ac_i(z,1)$
or to $\Ac_e(z,1)$. This contradicts the fact that $y^i\in \Ac_i(z,1)$ and $y^e\notin \Ac_i(z,1)${\color{black}; this contradiction proves}
 \eqref{yn0}.

By \eqref{divID} and the divergence theorem, 
\begin{align*}
H_s(z)&=\frac{1}{2s\,\omega_{n-2}} PV\!\!\! \int_{\Real^n}  \widehat\chi_{\Sc}(z,y) \text{div}_y\big[(z-y)|z-y|^{-(n+2s)}\big]\, dy\\
&=\frac{1}{s\,\omega_{n-2}} PV\!\!\! \int_{\partial\Ac_i(z,1)} |z-y|^{-(n+2s)} (z-y)\cdot \n_{\Ac_i}(y)   \, dy\\
&=\frac{1}{s\,\omega_{n-2}} PV\!\!\! \int_{\Sc} |z-y|^{-(n+2s)} (z-y)\cdot \n_{\Ac_i}(y)   \, dy,
\end{align*}
where the last equality follows from \eqref{yn0}.
\end{proof}

\begin{remark}
For {\color{black} a surface without boundary, the outward normal to $\Ac_i(z,1)$ coincides with the normal to the surface. For a surface $\Sc$ with boundary, at points $z\in \Sc$ such that all the half-lines starting at $z$ intersect $\Sc$ in at most one point (not counting $z$) the normal $\n_{\Ac_i}$ coincides with the normal $\n$ to $\Sc$. Indeed, for} these particular kinds of surfaces, \eqref{33} can be written as
$$
H_s(z)=\frac{1}{s\omega_{n-2}} PV\!\!\!\int_{\Sc} |z-y|^{-(n+2s)}(z-y)\cdot \bn(y)\,dy.
$$
\end{remark}
\vspace{5mm}
 \color{black}

{\color{black}Finally, s}imilar to what is done in \cite{AV} for surfaces {\color{black} without boundary}, we define the nonlocal directional curvature at $z$ in the direction of $\e\in \Tc_z(\Sc)$ by 
\beqn\label{Kse}
K_{s,\e}(z):={PV\!\!\!}\int_{\pi(z,\e)}|y'-z|^{n-2}\,\widehat\chi_\Sc(z,y)\,|z-y|^{-(n+2s)} dy,
\eeqn
where 
the notation is consistent with that used in \eqref{dirNMC}.  When the surface $\Sc$ is the boundary of an open set, this formula for the nonlocal directional curvature is consistent with \eqref{dirNMC}.  
The nonlocal mean and directional curvatures are related through the formula
$$H_s(z)=\frac{1}{\omega_{n-2}} \int_{\{\e\in\Tc_z(\Sc)\, |\ |\e|=1\}}K_{s,\e}(z)\, d\e,$$
meaning that the nonlocal mean curvature is the average of the nonlocal directional curvatures.
Unsurprisingly, just like the nonlocal curvatures for surfaces {\color{black} without boundary}, the nonlocal quantities converge to their local counterparts in the appropriate limit.

\begin{proposition}\label{limitNC}
Let $\Sc$ be  an oriented $C^1$ surface with orientation $\n$. 
For all $z\in\Sc$,
\begin{equation}\label{limitNLcurv}
\quad\lim_{s\rightarrow 1/2^-}(1-2s)K_{s,\e}(z)=K_\e(z)\quad \text{and}\quad\lim_{s\rightarrow 1/2^-}(1-2s)H_s(z)=H(z).
\end{equation}
\end{proposition}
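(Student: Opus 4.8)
The plan is to reduce the statement to its already-known counterpart \eqref{curvlim} for surfaces without boundary, by exploiting that the weight $\widehat\chi_\Sc(z,\cdot)$ entering \eqref{Hs} and \eqref{Kse} cannot distinguish $\Sc$ from any set whose boundary coincides with $\Sc$ on a small ball around $z$, and that the contribution of points far from $z$ is annihilated by the factor $1-2s$.

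First I would fix $z\in\Sc$, choose $\rho_0>0$ small enough that $\Sc\cap B_{\rho_0}(z)$ is a $C^1$ graph over the tangent plane $\Tc_z(\Sc)$, and pick a bounded open set $E$ such that $\partial E\cap B_{\rho_0}(z)=\Sc\cap B_{\rho_0}(z)$ and $\bn$ is the outward normal of $E$ there. As $B_{\rho_0}(z)$ is convex, the segment $[z,y]$ stays inside $B_{\rho_0}(z)$ for every $y\in B_{\rho_0}(z)$, so its crossing intersections with $\Sc$ coincide with those with $\partial E$; hence $\widehat\chi_\Sc(z,y)=\widehat\chi_{\partial E}(z,y)$, and by the consistency of \eqref{Hs}--\eqref{hatcar} with \eqref{defAV}--\eqref{tildecar} recorded above (namely that $\Ac_i(z,1)$ and $\Ac_e(z,1)$ reduce to the inside and outside of $E$ near $z$) we get $\widehat\chi_\Sc(z,y)=\widetilde\chi_E(y)$ for $\Hc^n$-almost every $y\in B_{\rho_0}(z)$.

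Next I would split the principal value integral in \eqref{Hs} over $B_{\rho_0}(z)$ and its complement, and the integral in \eqref{Kse} over $\pi(z,\e)\cap B_{\rho_0}(z)$ and $\pi(z,\e)\setminus B_{\rho_0}(z)$. On the ball the integrands coincide with those of $H_s^{E}(z)$ from \eqref{defAV} and of $K_{s,\e}^{E}(z)$ from \eqref{dirNMC}, so these pieces cancel in the differences $H_s(z)-H_s^{E}(z)$ and $K_{s,\e}(z)-K_{s,\e}^{E}(z)$; off the ball the principal value is superfluous, and since $|\widehat\chi_\Sc-\widetilde\chi_E|\le2$ and $|y'-z|\le|z-y|$,
$$
\bigl|H_s(z)-H_s^{E}(z)\bigr|\le\frac{2\omega_{n-1}}{\omega_{n-2}}\int_{\rho_0}^{\infty}r^{-1-2s}\,dr=\frac{\omega_{n-1}}{s\,\omega_{n-2}}\,\rho_0^{-2s},\qquad \bigl|K_{s,\e}(z)-K_{s,\e}^{E}(z)\bigr|\le 2\int_{\pi(z,\e)\setminus B_{\rho_0}(z)}|z-y|^{-2-2s}\,dy=\frac{\pi}{s}\,\rho_0^{-2s}.
$$
The right-hand sides remain bounded as $s\to1/2^-$, so $(1-2s)(H_s(z)-H_s^{E}(z))\to0$ and $(1-2s)(K_{s,\e}(z)-K_{s,\e}^{E}(z))\to0$. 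Since the classical curvatures are local we have $H^{E}(z)=H(z)$ and $K_\e^{E}(z)=K_\e(z)$, so applying \eqref{curvlim} to $E$ yields \eqref{limitNLcurv}.

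The step I expect to be the main obstacle is the local identification $\widehat\chi_\Sc(z,y)=\widetilde\chi_E(y)$ on $B_{\rho_0}(z)$: one must check that for almost every $y$ in the small ball the segment $[z,y]$ meets $\Sc$ in finitely many points, all transversal, and that the parity of this number of crossings, combined with the sign of $(z-y)\cdot\bn(z)$ occurring in the definitions of $\Ac_i(z,1)$ and $\Ac_e(z,1)$, reproduces exactly the alternative that $y$ lies below or above the local graph of $\Sc$ — which is the consistency statement already established for boundaries of open sets, now applied pointwise near $z$. A secondary point is that \eqref{curvlim} presupposes enough regularity of $\Sc$ at $z$ for $H(z)$ and $K_\e(z)$ to be defined at all; the comparison set $E$ should then be taken with boundary matching $\Sc$ near $z$ to the order required, so that no difficulty beyond those handled in \cite{AV} arises.
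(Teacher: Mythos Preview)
Your proposal is correct and follows essentially the same approach as the paper: both localize by showing the contribution from outside a small ball $B_{\rho_0}(z)$ is killed by the factor $(1-2s)$, replace $\Sc$ near $z$ by the boundary of an open set $E$, and then invoke \eqref{curvlim}. The paper treats only $K_{s,\e}$ explicitly and is terser about the identification step, whereas you spell out the key point $\widehat\chi_\Sc(z,y)=\widetilde\chi_E(y)$ on $B_{\rho_0}(z)$ (using convexity of the ball so that $[z,y]\subset B_{\rho_0}(z)$) and phrase the localization as a bound on the difference $H_s(z)-H_s^E(z)$ rather than on the tail of $H_s(z)$ alone; these are equivalent formulations of the same argument.
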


\begin{proof}
Only a proof of \eqref{limitNLcurv}$_1$ will be given, because the proof of \eqref{limitNLcurv}$_2$ is similar.  Fix $\ve>0$ and notice that
\begin{align*}
\Big |\int _{\pi(z,\e)\backslash B_\ve(z)} (1-2s) \frac{|y'-z|^{n-2}}{|y-z|^{n+2s}} \widehat\chi_\Sc(z,y)\, dy \Big| &\leq \int _{\pi(z,\e)\backslash B_\ve(z)} \frac{1-2s}{|y-z|^{2+2s}} \, dy\\
& = \int _{\pi(z,\e)\backslash B_\ve(0)} \frac{1-2s}{|y|^{2+2s}} \, dy;
\end{align*}
the last integral goes to zero as $s$ goes to $1/2$.  Thus,
$$
\lim_{s\rightarrow 1/2^-} (1-2s) K_{s,\e}(z) = \lim_{s\rightarrow 1/2^-}  \int _{\pi(z,\e)\cap B_\ve(z)} (1-2s) \frac{|y'-z|^{n-2}}{|y-z|^{n+2s}} \widehat\chi_\Sc(z,y)\, dy,
$$
meaning that, for $s$ approaching $1/2$, $(1-2s)K_{s,\e}(z)$ only depends on the surface $\Sc$ in a small neighborhood of $z$.  
Choose $\ve$ small enough so that the part of $\Sc$ inside $B_\ve(z)$ is diffeomorphic to a disk.
It is possible to find an open set $E$ with smooth boundary such that $\Sc\cap B_\ve(z)=\partial E \cap B_\ve(z)$. We denote by  $K^{\partial E}_{s,\e}(z)$ the nonlocal directional curvature of $\partial E$ at $z$, so that
$$
\lim_{s\rightarrow 1/2^-}(1-2s)K_{s,\e}(z)=\lim_{s\rightarrow 1/2^-}(1-2s)K^{\partial E}_{s,\e}(z)=K^{\partial E}_\e(z),
$$
where \eqref{curvlim} has been utilized.  Since $K^{\partial E}_\e(z)=K_\e (z)$, we have, as desired, that
$$
\lim_{s\rightarrow 1/2^-}(1-2s)K_{s,\e}(z)=K_\e(z).
$$
\end{proof}

\noindent
{\bf Acknowledgement.}
{\color{black}We gratefully acknowledge the contributions of three reviewers, whose pertinent comments led us to a more complete and clearer exposition of our ideas.
}

\medskip
Received xxxx 20xx; revised xxxx 20xx.
\medskip

\end{document}